\newtheorem{theo}{Theorem}
\newtheorem{defi}[theo]{Definition}
\newtheorem{lemm}[theo]{Lemma}
\newtheorem{prop}[theo]{Proposition}
\newtheorem{exam}[theo]{Example}
\begin{document}
\title{Locally Compact Separable Abelian Group Actions on Factors with the Rohlin Property}
\author{Koichi Shimada}\thanks{Communicated by N. Ozawa. Received August 24. 2013. Revised December 3. 2013 \\ Department of Mathematical Sciences, the University of Tokyo, Komaba, Tokyo, 153-8914, Japan \\ shimada@ms.u-tokyo.ac.jp}

\date{}
\subjclass[2010]{46L40}

\begin{abstract}
We show  a classification theorem for actions with the Rohlin property of locally compact separable abelian groups on factors. This is a generalization of the recent work due to Masuda--Tomatsu on Rohlin flows.
\end{abstract}

\maketitle
\section{Introduction}
Studying group actions is one of the most interesting topics in theory of operator algebra.  Since Connes \cite{C}, \cite{C2} has completely classified single automorphisms of the approximately finite dimensional (hereafter abbreviated by AFD) factor of type $\mathrm{II}_1$ up to cocycle conjugacy, the classification of group actions has been remarkably developed; discrete amenable group actions on AFD factors are completely classified by many hands \cite{KtST}, \cite{KwhST}, \cite{M}, \cite{O},  \cite{ST}, \cite{ST2}, and there has been great progress in the classification of compact group actions on AFD factors by Jones--Takesaki \cite{JT}, Kawahigashi--Takesaki \cite{KwhT}, Masuda--Tomatsu \cite{MT1}, \cite{MT2}, \cite{MT3}. Then our next interest is the classification of actions of non-compact continuous groups. In the study of locally compact abelian group actions, many problems are left. As a step to understanding their actions, outer actions are now intensively studied. As a candidate for an appropriate outerness for flows, the Rohlin property was introduced by Kishimoto  \cite{Ksm}, Kawamuro \cite{Kwm} and Masuda--Tomatsu \cite{MT}, and the last two authors have succeeded in classifying Rohlin flows on von Neumann algebras in \cite{MT}. As mentioned in Problem 8.1 of \cite{MT}, the Rohlin property can be generalized for locally compact abelian group actions. Hence it is natural to extend the result of Masuda--Tomatsu for locally compact abelian group actions. In this direction, Asano \cite{A} has classified actions of $\mathbf{R}^n$ with the Rohlin property. In this paper, we will generalize results in \cite{A} and \cite{MT}, and classify actions with the Rohlin property of locally compact separable abelian groups on factors (Theorem \ref{main}). This gives an answer to Problem 8.1 of \cite{MT}.

This paper is organized as follows. In Section \ref{The Rohlin Property}, we recall the definition of ultraproduct von Neumann algebras and the Rohlin property. In Section \ref{classification}, we will present a proof of the main theorem. The proof is basically modeled after that in \cite{MT}. Hence we will explain what are different from their proof. For example, contrary to $\mathbf{R}$, some locally compact abelian groups do not have enough compact quotients. We need to deal with this kind of problems. In Section \ref{ex}, we will see some examples of Rohlin actions.

\section{Preliminary}
\label{The Rohlin Property}

\subsection{Notations}
Let $M$ be a von Neumann algebra. We denote the set of unitaries of $M$ by $U(M)$. For $\phi \in M_*$ and $a\in M$, set $[\phi, a]:=a\phi -\phi a$. For $\phi \in M_*^+$ and $x\in M$, set 
\[ \| x\|_\phi ^\sharp :=\sqrt{\frac{\phi (x^*x+xx^*)}{2}}. \]

This $\|\cdot \|_\phi^\sharp $ is a seminorm on $M$. If $\phi $ is faithful, then this norm metrizes the strong* topology of the unit ball of $M$.

\subsection{Ultraproduct von Neumann Algebras}
 First of all, we recall ultapruduct von Neumann algebras. Basic references are Ando--Haagerup \cite{AH} and Ocneanu \cite{O}. Let $\omega $ be a free ultrafilter on $\textbf{N} $ and $M$ be a separable von Neumann algebra. We denote by $l^{\infty }(M)$ the $\mathrm{C}^*$-algebra consisting of all norm bounded sequences in $M$. Set
\[ I_{\omega }:=\{ (x_n) \in l^{\infty }(M) \mid \text{strong*-lim}_{n\to \omega }x_n=0 \}, \]
\begin{align*}
 N_{\omega }:=\{ (x_n) \in l^{\infty }(M) \mid \ &\text{for all } (y_n)\in I_{\omega }, \\
                                            &\text{ we have } (x_ny_n)\in I_\omega \text{ and }(y_nx_n)\in I_\omega \} ,
 \end{align*}
\[ C_{\omega }:=\{ (x_n) \in l^{\infty }(M) \mid \text{for all } \phi \in M_{*}, \text{we have} \lim _{n\to \omega }\|[\phi ,x_n]\| =0 \}. \]
Then we have $I_{\omega }\subset C_{\omega }\subset N_{\omega }$ and $I_{\omega }$ is a closed ideal of $N_{\omega}$. Hence we can define the quotient $\mathrm{C}^{*}$-algebra $M^{\omega }:=N_{\omega }/I_{\omega }$. Denote the canonical quotient map $N_{\omega }\to M^{\omega }$ by $\pi $. Set $M_{\omega }:=\pi (C_{\omega })$. Then $M_{\omega }$ and $M^{\omega }$ are von Neumann algebras as in Proposition 5.1 of Ocneanu \cite{O}. 

Let $\tau ^\omega \colon M^\omega \to M$ be the map defined by $\tau ^\omega (\pi((x_n)))=\lim _{n\to \omega }x_n$. Here, the limit is taken in the weak topology of $M$. This map is a faithful normal conditional expectation (see Subsection 2.4 of \cite{MT}). 

Let $\alpha $ be an automorphism of $M$. We define an automorphism $\alpha ^{\omega }$ of $M^{\omega }$ by $\alpha^{\omega }(\pi ((x_n)))=\pi ((\alpha (x_n)))$ for $\pi ((x_n))\in M^{\omega}$. Then we have $\alpha^{\omega }(M_{\omega })=M_{\omega }$. By restricting $\alpha ^{\omega }$ to $M_{\omega }$, we define an automorphism $\alpha _{\omega }$ of $M_{\omega }$. Hereafter we omit $\pi$ and denote $\alpha ^{\omega }$ and $\alpha _{\omega }$ by $\alpha $ if no confusion arises. 

\subsection{The Rohlin Property}

 Next, we recall the Rohlin property. A basic reference is \cite{MT}. In the previous subsection, we have seen that it is possible to lift automorphisms of von Neumann algebras on their ultraproducts. Hence it is natural to consider lifts of actions of locally compact abelian groups on $M^\omega $ and $M_\omega $. However, lifts may not be continuous. Instead of considering $\alpha ^\omega $ on whole $M^\omega$, we consider their continuous part. Let $G$ be a locally compact separable abelian group. In the rest of the paper, we always assume that groups and von Neumann algebras are separable, except for ultaproduct von Neumann algebras. We denote the group operation of $G$ by $+$. Let $d$ be a translation invariant metric on $G$ (see Theorem 8.3 of \cite{HR}). Choose a normal faithful state $\varphi $ on $M$. For an action $\alpha $ of $G$ on a von Neumann algebra $M$, set 

\begin{multline*}
 M^\omega _{\alpha }:=\{ (x_n)\in M^\omega \mid \text{for \ each } \epsilon >0,\text{ there \ exists } \delta >0 \text{ such that } \\
      \{n\in \textbf{N}\mid \| \alpha _t(x_n)-x_n\|_{\varphi }^{\sharp } <\epsilon   \ \mathrm{ for }\ t\in G\ \mathrm{with} \ d(0,t)<\delta \}\in \omega \}, 
\end{multline*}

\begin{multline*}
 M_{\omega ,\alpha }:=\{ (x_n)\in M_\omega \mid \text{for \ each } \epsilon >0,\text{ there exists } \delta >0 \text{ such that } \\
      \{n\in \textbf{N}\mid \| \alpha _t(x_n)-x_n\|_{\varphi }^{\sharp } <\epsilon   \ \mathrm{ for }\ t\in G\ \mathrm{with} \ d(0,t)<\delta \}\in \omega \}. 
\end{multline*}
Since all metrics on $G$ are mutually equivalent, this definition does not depend on the choice of $d$. The condition appearing in the definition of $M^\omega _\alpha $ means the $\omega$-equicontinuity of the family of maps $\{G\ni t \mapsto \alpha _t(x_n)\}$ (see Definition 3.1 and Lemma 3.2 of \cite{MT}). Now, we will define the Rohlin property.

\begin{defi}
An action $\theta $ of a locally compact abelian group $G$ on a von Neumann algebra $M$ is said to have the Rohlin property if for each $p\in \hat{G}$, there exists a unitary $u$ of $M_{\omega, \theta }$ satisfying $\theta _t(u)=\langle t,-p\rangle u$ for all $t\in G$.
\end{defi}
The Rohlin property is also defined for Borel cocycle actions (see Definition 3.4 and Definition 4.1 of \cite{MT}). For actions, by the same argument as in the proof of Proposition 3.5 of \cite{MT}, it is shown that the two definitions coincide.

\section{A Classification Theorem}
\label{classification}
Let $G$ be a locally compact abelian group. Let $\alpha ^1$ and $\alpha ^2$ be two actions of $G$ on a von Neumann algebra $M$. Two actions $\alpha ^1$ and $\alpha ^2$ are said to be  \textit{cocycle conjugate} if there exist an $\alpha ^2$-cocycle $u$ and an automorphism $\sigma $ of $M$ satisfying $\mathrm{Ad}u_t \circ \alpha ^2_t =\sigma \circ \alpha ^1_t \circ \sigma ^{-1}$ for all $t\in G$. If $\sigma $ can be chosen to be  approximately inner, then $\alpha ^1$ is said to be \textit{strongly cocycle conjugate} to $\alpha ^2$ (see Subsection 2.1 of \cite{MT}).

Our main theorem of this paper is the following.

\begin{theo}
\label{main}
Let $G$ be a locally compact abelian group. Let $\alpha $ and $\beta $ be actions of $G$ with the Rohlin property on a factor $M$. Then $\alpha $ and $\beta$ are strongly cocycle conjugate if and only if $\alpha _t \circ \beta _{-t} \in \overline {\mathrm{Int}}(M)$ for all $t\in G$.
\end{theo}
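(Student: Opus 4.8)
The necessity is a formal computation in the outer automorphism group $\mathrm{Out}(M)=\mathrm{Aut}(M)/\overline{\mathrm{Int}}(M)$. Suppose $\mathrm{Ad}\,u_t\circ\beta_t=\sigma\circ\alpha_t\circ\sigma^{-1}$ for all $t$, with $u$ a $\beta$-cocycle and $\sigma\in\overline{\mathrm{Int}}(M)$. Then $\beta_{-t}=(\beta_t)^{-1}=\sigma\circ\alpha_{-t}\circ\sigma^{-1}\circ\mathrm{Ad}\,u_t$, so $\alpha_t\circ\beta_{-t}=\alpha_t\circ\sigma\circ\alpha_{-t}\circ\sigma^{-1}\circ\mathrm{Ad}\,u_t$. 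Since $\overline{\mathrm{Int}}(M)$ is a normal subgroup of $\mathrm{Aut}(M)$ containing every inner automorphism and containing $\sigma$, the image of the right-hand side in $\mathrm{Out}(M)$ is $[\alpha_t]\,[\alpha_{-t}]=\mathrm{id}$, whence $\alpha_t\circ\beta_{-t}\in\overline{\mathrm{Int}}(M)$.

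For the sufficiency I would follow the Evans--Kishimoto intertwining scheme used in \cite{MT}. The aim is to construct a sequence of unitaries $w_n\in M$ so that $\sigma:=\lim_n\mathrm{Ad}(w_n\cdots w_1)$ exists in $\mathrm{Aut}(M)$, and is therefore automatically approximately inner, together with a $\beta$-cocycle $u$ satisfying $\mathrm{Ad}\,u_t\circ\beta_t=\sigma\circ\alpha_t\circ\sigma^{-1}$. The construction is iterative: at each stage one selects $w_n$ so that $\mathrm{Ad}(w_n\cdots w_1)$ conjugates $\beta$ to $\alpha$ modulo a cocycle, to within a prescribed error on a prescribed compact subset of $G$, while keeping the successive corrections summable so that the infinite products converge in the strong$^*$ topology measured by $\|\cdot\|_\varphi^\sharp$.

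Two ingredients drive each step. The first is a cohomology-vanishing theorem for Rohlin actions: if $\alpha$ is Rohlin and $v$ is an $\alpha$-cocycle whose associated outer data is trivial and which is close to $1$ on a given compact set, then $v$ is approximately a coboundary $w\,\alpha_\bullet(w)^*$. This is precisely where the hypothesis $\alpha_t\circ\beta_{-t}\in\overline{\mathrm{Int}}(M)$ is used: it forces the discrepancy between $\alpha$ and $\beta$ to be trivial at the level of $\mathrm{Out}(M)$, so that the entire obstruction to intertwining lies in $\overline{\mathrm{Int}}(M)$ and can be absorbed by inner perturbations. One typically first establishes the relevant vanishing at the level of Borel cocycle actions (a $2$-cohomology vanishing statement) and then specializes to genuine actions via the coincidence of the two notions of the Rohlin property noted after the definition. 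The second ingredient is the Rohlin property itself: the $\theta$-equivariant unitaries $u\in M_{\omega,\theta}$ with $\theta_t(u)=\langle t,-p\rangle u$ yield, after a standard lifting argument, almost central unitaries in $M$ that are nearly $\theta$-invariant on large compact sets and whose eigenvalue realizes a given character $p\in\hat G$; these furnish the averaging and the equivariant corrections needed at each stage.

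The main obstacle, and the chief departure from \cite{MT}, is that the argument for $\mathbf{R}$ exploits the compact quotients $\mathbf{R}/\lambda\mathbf{Z}\cong\mathbf{T}$ to build Rohlin towers with a clean periodic structure, whereas a general locally compact separable abelian $G$ need not admit enough compact quotients, as the authors emphasize. I would circumvent this via the structure theorem, writing $G\cong\mathbf{R}^a\times H$ with $H$ possessing a compact open subgroup $K$: the $\mathbf{R}^a$-direction is treated as in \cite{A} and \cite{MT}, while along $H$ the missing compact quotients are replaced by the pair $(K,H/K)$, using compactness of $K$ and discreteness of $H/K$ to localize the intertwining. Controlling the interaction of these factors --- in particular ensuring that the $\omega$-equicontinuity condition built into the definition of $M_{\omega,\theta}$ is respected uniformly across a compact exhaustion of $G$ --- is the delicate point, and it is here that Pontryagin duality relating the topology of $\hat G$, which indexes the Rohlin unitaries, to that of $G$ must be exploited with care.
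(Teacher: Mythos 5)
Your necessity computation in $\mathrm{Aut}(M)/\overline{\mathrm{Int}}(M)$ is correct, and your overall architecture for sufficiency --- a Bratteli--Elliott--Evans--Kishimoto intertwining driven by (i) $2$- and approximate $1$-cohomology vanishing for Rohlin (cocycle) actions and (ii) Rohlin averaging producing almost central, equivariant unitaries, with the lifting of Borel unitary paths in the background --- is exactly the skeleton of the paper (Propositions \ref{2-cohomology} and \ref{ap 1-coho}, Lemmas \ref{averaging} and \ref{10}, and the final alternation with Lemma \ref{x}). One attribution should be corrected: the hypothesis $\alpha_t\circ\beta_{-t}\in\overline{\mathrm{Int}}(M)$ is not what makes the cohomology vanishing run (that needs only the Rohlin property); it enters through the analogue of Lemma 5.8 of \cite{MT}, namely Lemma \ref{x}, which converts pointwise approximate innerness into an $\alpha$-cocycle $u$ with $\int_T\|\mathrm{Ad}\,u_t\circ\alpha_t(\phi)-\beta_t(\phi)\|\,d\mu_G(t)<\epsilon$, the quantity the intertwining scheme actually consumes.

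The genuine gap is in your structural device. The decomposition $G\cong\mathbf{R}^a\times H$ with $K\leq H$ compact open is valid, but ``using compactness of $K$ and discreteness of $H/K$ to localize the intertwining'' does not produce what the argument needs, for two reasons. First, $H/K$ need not be finitely generated (think of $\mathbf{Q}_p/\mathbf{Z}_p$ or $\bigoplus_{\mathbf{N}}\mathbf{Z}$), so the pair $(K,H/K)$ gives no box-shaped truncations; you are forced back to an exhaustion by compactly generated subgroups in any case. Second, and more fundamentally, the averaging technique (Lemma \ref{averaging}) requires more than F\o lner sets: the set $L$ must be a compact \emph{quotient group} of a clopen subgroup $H'$ of $G$, so that the translation $H'\curvearrowright L^\infty(L)$ is a genuine action and the embedding $\Theta\colon M\otimes L^\infty(L)\to M^\omega_\theta$ can be assembled from a unitary \emph{representation} of $\hat{L}$ inside $M_{\omega,\theta}$; upgrading individual Rohlin eigenunitaries to such a representation is itself nontrivial (the torsion correction of Lemma \ref{1dim}, the fast reindexation trick to force commutation, and a diagonal argument over finitely generated $C_\nu\nearrow\hat{L}$ as in Proposition \ref{1}), and your sketch treats it as a ``standard lifting argument.'' The paper's actual resolution is an exhaustion of $G$ by compactly generated clopen subgroups $H_k\cong\mathbf{R}^n\times K_k\times\mathbf{Z}^{m_k}$ (with $n$ independent of $k$, by Theorem 9.14 of \cite{HR}), with box-shaped compact quotients $L_k\subset H_k$ and compact sets $T_k$ tied together by the coupled conditions $T_k+L_k\subset T_{k+1}$, $\bigcup_k T_k=G$, an almost-invariance ratio for $L_k$ under $T_k+T_k$ controlled at scale $\epsilon_k/\bigl(6\mu_G(T_k)^2\bigr)$, and the summability $\sum_{k=n+1}^\infty\sqrt{13\mu_G(T_k)\epsilon_k}<\epsilon_n$; these quantitative interlockings are precisely what make the cohomology-vanishing estimates close across the exhaustion, and they are absent from your outline. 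If you repair your $\mathbf{R}^a\times H$ split along these lines you will essentially reconstruct the paper's Subsection \ref{cv}, so the split buys nothing over working with $H_k$ wholesale.
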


In the rest of this section, we will present a proof of  this theorem. The proof is modeled after that in \cite{MT}. However, at some points of the proof, we need to deal with problems different from those in their proof. One of the problems is that some locally compact abelian groups do not have enough compact quotients. Instead, we consider compact quotients of compactly generated clopen subgroups. By Theorem 9.14 of Hewitt--Ross \cite{HR}, a compactly generated subgroup is isomorphic to $\mathbf{R}^n \times K \times \mathbf{Z}^m$ for some compact abelian group $K$ and non-negative integers $n$, $m$. We deal with this problem in Subsection \ref{cv}.

\subsection{Lifts of Borel Unitary Paths}

The first step of our proof of Theorem \ref{main} is to find a representing unitary sequence $\{ u^\nu _t\}$ for a Borel map $U_t\colon G\to U(M^\omega _\theta)$ so that the family $\{ t\mapsto u_t^\nu \}$ is ``almost'' $\omega$-equicontinuous. More precisely, we have the following.

\begin{lemm}
\textup{(See Lemma 3.24 of \cite{MT})} Let $(\theta , c)$ be a Borel cocycle action of a locally compact abelian group $G$ on a factor $M$. Suppose that $U\colon G\to M_\theta ^\omega $ is a Borel unitary map. Let $H$ be a compactly generated clopen subgroup of $G$, which is isomorphic to $\mathbf{R}^n\times K \times \mathbf{Z}^m$ for some non-negative integers $n$, $m$ and a compact abelian group $K$. Let $L$ be a subset of $H$ of the form 
\[ L=[0, S_1)\times \cdots \times [0, S_n) \times K \times [0, N_1)\times \cdots [0, N_m)\]
 when we identify $H$ with $\mathbf{R}^n \times K\times \mathbf{Z}^m$. Then for any $\delta >0$ with $0<\delta <1$ and a finite set $\Phi $ of $M_*^+$, there exist a compact subset $I$ of $L\times L$, a compact subset $C$ of $L$ and a lift $\{u_t^\nu \}$ of $U$ satisfying the following conditions.

\textup{(1)} We have $\pi _\omega \bigl((u_t^\nu )_\nu \bigr)=U_t$ for almost every $t\in L$ and the equality holds for all $t\in C$.

\textup{(2)} We have $\mu _G(L\setminus C)<\delta $, where $\mu _G$ is the Haar measure on $G$.

\textup{(3)} For all $\nu \in \mathbf{N}$, the map $L \ni t \mapsto u_t^\nu $ is Borel and its restriction to $C$ is strongly continuous.

\textup{(4)} The family of maps $\{C\ni t \mapsto u_t^\nu \} _\nu $ is $\omega$-equicontinuous.

\textup{(5)} We have $(\mu _G\times \mu _G) (I) \geq (1-\delta )(\mu _G\times \mu _G)(L\times L)$.

\textup{(6)} The family of maps $\{ I\ni (t,s)\mapsto u_t^\nu \theta _t(u_s^\nu )c(t,s)(u_{t+s}^\nu)^* \} _\nu $ is $\omega $-equicontinuous.

\textup{(7)} The following limit is the uniform convergence on $I$ for all $\phi \in \Phi$.
\[ \lim _{\nu \to \omega} \| u_t^\nu \theta _t (u_s^\nu )c(t,s)(u_{t+s}^\nu )^*-1 \| _\phi ^\sharp =\| U_t \theta _t (U_s )c(t,s)U_{t+s}^* -1\| _{\phi ^\omega }^\sharp . \]
\label{10}
\end{lemm}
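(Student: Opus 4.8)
The plan is to produce the lift by a measurable selection argument combined with a careful exhaustion of $L$ by a compact set on which everything is well-behaved, following the template of Lemma 3.24 of \cite{MT} but replacing the role of compact quotients of $\mathbf{R}$ by compact quotients of the compactly generated clopen subgroup $H$. First I would fix a dense sequence $(t_k)$ in $L$ and, using the separability of $M_*$ together with the fact that $U$ takes values in $M_\theta^\omega \subset M^\omega$, choose for each $t$ a representing bounded sequence $(u_t^\nu)_\nu$ of unitaries with $\pi_\omega((u_t^\nu)_\nu)=U_t$; the measurable selection theorem (applied to the Borel map $t\mapsto U_t$ into the unit ball of $M^\omega$ with its standard Borel structure) lets me arrange that $t\mapsto u_t^\nu$ is Borel for each $\nu$. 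This gives a first candidate satisfying (1) in the almost-everywhere sense and (3) in the Borel sense.

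The core of the argument is the passage from ``Borel'' to ``strongly continuous on a large compact subset'' together with the $\omega$-equicontinuity statements (4) and (6). Here I would invoke Lusin's theorem on the compact set $L$: since each $t\mapsto u_t^\nu$ is Borel and $L$ carries the finite Haar measure $\mu_G$, for every $\varepsilon_\nu$ there is a compact $C_\nu \subset L$ with $\mu_G(L\setminus C_\nu)<\varepsilon_\nu$ on which the restriction is strongly continuous, and similarly a compact $I_\nu\subset L\times L$ controlling the cocycle expression in (6). The delicate point is that conditions (4) and (6) are \emph{uniform-in-$\nu$} (equicontinuity of the whole family), not merely continuity for each fixed $\nu$. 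To get this I would exploit that $U$ lands in $M_\theta^\omega$, so the representing sequences are $\omega$-equicontinuous to begin with, and then pass to a diagonal subsequence along $\omega$: choosing the $\varepsilon_\nu\to 0$ fast enough and intersecting the $C_\nu$ (respectively $I_\nu$) over a cofinal-in-$\omega$ set, one builds a single compact $C$ (respectively $I$) on which the entire family is equicontinuous, while keeping $\mu_G(L\setminus C)<\delta$ and $(\mu_G\times\mu_G)(I)\ge(1-\delta)(\mu_G\times\mu_G)(L\times L)$, which are exactly (2) and (5). The structure $H\cong \mathbf{R}^n\times K\times \mathbf{Z}^m$ enters in ensuring that $L$ is a compact set of finite Haar measure of product form, so that Lusin's theorem and the product-measure estimate for $I$ apply cleanly.

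Conditions (1) on $C$ and (7) are then obtained by refining the choices: after restricting to $C$, continuity forces $\pi_\omega((u_t^\nu)_\nu)=U_t$ to hold for \emph{all} $t\in C$ rather than almost everywhere, because both sides are strongly continuous and agree on a dense subset. For (7), I would use that $\|\cdot\|_\phi^\sharp$ applied to the cocycle discrepancy is, for each fixed $(t,s)$, computed by the defining limit $\lim_{\nu\to\omega}$, and the established equicontinuity in (6) upgrades this pointwise convergence to uniform convergence on the compact set $I$ via a standard Arzelà--Ascoli-type argument (equicontinuity plus pointwise convergence on a compact set yields uniform convergence).

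The main obstacle I anticipate is precisely the interplay between the measurable selection and the \emph{uniform} equicontinuity demanded in (4) and (6): an arbitrary Borel lift has no reason to be jointly well-behaved across $\nu$, so the whole force of the construction lies in using the hypothesis $U_t\in M_\theta^\omega$ (which encodes $\omega$-equicontinuity of the representing sequences) to control the lift simultaneously for all $\nu$, and in threading the diagonal/Lusin argument so that the measure losses accumulate to less than $\delta$. A secondary technical issue, and the one genuinely different from the $\mathbf{R}$-case of \cite{MT}, is handling the compact factor $K$ in $H\cong\mathbf{R}^n\times K\times\mathbf{Z}^m$: because $G$ need not have enough compact \emph{quotients}, one works instead with this compact direction inside $L$, and I would check that the Haar-measure estimates and the product-form of $L$ remain compatible with the equicontinuity construction in that direction.
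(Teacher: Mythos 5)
There is a genuine gap, and it sits exactly at the two places where your proposal departs from what can actually be proved. First, the measurable selection step does not work. You want to apply a selection theorem to the Borel map $t\mapsto U_t$ ``into the unit ball of $M^\omega$ with its standard Borel structure,'' but $M^\omega$ is a non-separable von Neumann algebra, so its unit ball carries no standard Borel structure in the relevant sense; more fundamentally, the lifting relation $\pi _\omega \bigl((u_t^\nu )_\nu \bigr)=U_t$ is defined through strong* limits along the free ultrafilter $\omega$, and limits along a free ultrafilter are not Borel functions of the sequence, so the graph of the relation on $L\times \prod _\nu (M)_1$ is not Borel (or even analytic) and no selection theorem applies. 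The paper is explicit about this obstruction --- in the proof of Lemma \ref{7} it notes ``$t\mapsto \tilde{U}_t^\nu$ may not be Borel measurable'' --- and it avoids selection altogether: after the Lusin-type reduction to compact $L$ with $U|_L$ strongly continuous (a step you share), representatives $\tilde{U}^\nu _{t_{k,l}}$ are chosen only at a countable set of anchor points, one point $t_{k,l}$ in each piece $K^k_l$ of a nested sequence of finite Borel partitions $P^k$ of mesh $\leq 1/N_k$, and the lift is the simple step function $u^\nu _t:=\tilde{U}^\nu _{t_{k,l}}$ for $t\in K^k_l$ and $\nu \in F_k\setminus F_{k+1}$; Borel measurability of $t\mapsto u^\nu _t$ is then automatic, with the resolution of the steps refining as $\nu$ moves along the sets $F_k\in \omega$.

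Second, your route to the equicontinuity conditions (4) and (6) conflates two different equicontinuities. The hypothesis $U_t\in M^\omega _\theta$ encodes $\omega$-equicontinuity of $s\mapsto \theta _s(x_n)$ for each \emph{fixed} representing sequence, i.e.\ continuity of the group action on a fixed element; it says nothing about the dependence of $u^\nu _t$ on the path parameter $t$, which is what (4) asserts. Indeed an arbitrary lift has no equicontinuity to recover: multiplying $u^\nu _t$ by unitaries $w^\nu _t$ that oscillate wildly in $t$ for each fixed $\nu$ but tend strongly* to $1$ as $\nu \to \omega$ produces another perfectly good lift that is equicontinuous on no set of positive measure. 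Consequently, intersecting per-$\nu$ Lusin sets $C_\nu$ over a cofinal set yields a compact $C$ on which each individual $u^\nu$ is continuous, but provides no uniform modulus across $\nu$, and your diagonal argument cannot manufacture one. The uniformity must be \emph{built into} the construction, which is what the paper's Claim does: (i) uniform continuity of $U$ on compact $L$ fixes $N_k$ with $\| U_s-U_t\| ^\sharp _{\psi ^\omega }<1/2k$ whenever $d(s,t)\leq 1/N_k$, and (ii) the sets $F_k\in \omega$ are chosen so that at the finitely many anchor points $\| \tilde{U}^\nu _s-\tilde{U}^\nu _t\| ^\sharp _\psi <\| U_s-U_t\| ^\sharp _{\psi ^\omega }+1/(2k)$ for $\nu \in F_k$; since $u^\nu$ is constant at scale $1/N_k$ for $\nu \in F_k\setminus F_{k+1}$, the whole family is equicontinuous on $C=\bigcap _kC_k$, and the analogous bookkeeping on $L\times L$ gives (6), after which your Arzel\`a--Ascoli remark for (7) is fine. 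A smaller but real flaw: your claim that a.e.\ equality $\pi _\omega ((u^\nu _t)_\nu )=U_t$ upgrades to all of $C$ ``by density'' fails in general, since a conull subset of $C$ need not be dense in $C$ (pieces of a Lusin compactum can have measure zero); the paper instead arranges $\bigcup _kA_k\subset C$, gets equality at the anchor points by construction, and propagates it by equicontinuity, finally patching countably many disjoint compacta $D_n$ to get the a.e.\ statement on $L$. Your closing remark about the compact factor $K$ is correct but peripheral: the product form of $L$ matters for how Lemma \ref{10} is used later (it makes $L$ a compact quotient of the clopen subgroup $H$), not for the lifting argument itself, which only needs $L$ compact of finite Haar measure.
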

The proof is similar to that of Lemma 3.24 of \cite{MT}. Here, we only prove the following lemma, which corresponds to Lemma 3.21 of \cite{MT}. The proof is a simple approximation by Borel simple step functions.

\begin{lemm} \textup{(See also Lemma 3.21 of \cite{MT})}
Let $G$ be a locally compact abelian group, $\theta \colon G\to \mathrm{Aut}(M)$ be a Borel map and $U\colon G\to M_{\theta}^\omega $ be a Borel unitary map. Then for any Borel subset $L$ of $G$ with $0<\mu _G(L)<\infty $ and for any $\epsilon >0$, there exist a compact subset $C$ of $L$ and a sequence $\{u_t^\nu \} _{\nu \in \textbf{N}}$ of unitaries of $M$ for any $t\in L$ which satisfy the following conditions. 

\textup{(1)} We have $\pi _\omega \bigl((u_t^\nu )_\nu \bigr)=U_t$ for almost every $t\in L$ and the equality holds for all $t\in C$.

\textup{(2)} We have $\mu _G(L\setminus C)<\epsilon $.

\textup{(3)} For all $\nu \in \mathbf{N}$, the map $L \ni t \mapsto u_t^\nu $ is Borel and its restriction to $C$ is strongly continuous.

\textup{(4)} The family of maps $\{C\ni t \mapsto u_t^\nu \} _\nu $ is $\omega$-equicontinuous.
\label{7}
\end{lemm}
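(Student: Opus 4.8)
The plan is to separate two requirements that pull in opposite directions: representing $U_t$ \emph{exactly} (condition (1)), which is a measurable-selection problem, and making the representatives strongly continuous and $\omega$-equicontinuous on a large compact set (conditions (3) and (4)), which is a regularity problem. The crucial observation — and the reason one cannot simply restrict an arbitrary Borel lift to a compact set — is that if $t\mapsto u^\nu_t$ is merely a measurable selection, then Lusin's theorem produces for each fixed $\nu$ a compact set of continuity, but these sets depend on $\nu$ and their intersection over all $\nu$ may be null; there is then no reason for the family to be $\omega$-equicontinuous on any set of positive measure. I would therefore \emph{reconstruct} the lift on the compact set by piecewise-constant (step-function) approximation, as the hint suggests.

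First I would record the base case, where $U=\sum_{j=1}^k \mathbf{1}_{L_j}U^{(j)}$ is a Borel step function for a finite Borel partition $L=\bigsqcup_j L_j$ and fixed values $U^{(j)}\in M^\omega_\theta$. Choosing for each $j$ a representing unitary sequence $(u^{(j),\nu})_\nu$ with $\pi_\omega\bigl((u^{(j),\nu})_\nu\bigr)=U^{(j)}$, I set $u^\nu_t:=u^{(j),\nu}$ for $t\in L_j$; this is Borel and satisfies (1). By inner regularity of $\mu_G$ I shrink each $L_j$ to a compact $C_j\subseteq L_j$ with $\mu_G(L_j\setminus C_j)$ small and put $C:=\bigsqcup_j C_j$. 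Since finitely many pairwise disjoint compact sets have pairwise positive distance, there is $\delta_0>0$ such that any $t,s\in C$ with $d(t,s)<\delta_0$ lie in the same $C_j$; hence $t\mapsto u^\nu_t$ is locally constant, thus strongly continuous on $C$, and the family is trivially $\omega$-equicontinuous (indeed $u^\nu_t=u^\nu_s$ once $d(t,s)<\delta_0$). So the lemma holds for step functions.

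For general $U$ I would first fix a global Borel lift $t\mapsto(u^\nu_t)_\nu$ on $L$ with $\pi_\omega\bigl((u^\nu_t)_\nu\bigr)=U_t$ a.e., via a measurable-selection argument (using that $U(M)$ is Polish in the strong$^*$ topology); this secures (1) almost everywhere on $L$ and the Borel requirement of (3) off the eventual $C$. Writing $\varphi^\omega:=\varphi\circ\tau^\omega$ for the induced faithful normal state on $M^\omega$, Lusin's theorem yields a compact $C_0\subseteq L$, with $\mu_G(L\setminus C_0)$ small, on which $U$ is continuous for $\|\cdot\|^\sharp_{\varphi^\omega}$, hence uniformly continuous. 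I then approximate $U|_{C_0}$ uniformly by step functions $S^{(m)}$ that are constant on Borel pieces of diameter $<1/m$, with values sampled among the $U_t$; applying the base case to each $S^{(m)}$ and intersecting the resulting shrunk compact sets (with summable measure losses) produces a single compact $C\subseteq C_0$, still with $\mu_G(L\setminus C)<\epsilon$, on which every $S^{(m)}$ admits a piecewise-constant lift that is strongly continuous and $\omega$-equicontinuous. Finally I pass to a telescoped limit in $m$: arranging $\|S^{(m)}_t-S^{(m+1)}_t\|^\sharp_{\varphi^\omega}$ to decay geometrically and choosing the lift of $S^{(m+1)}$ to be $\omega$-uniformly close to that of $S^{(m)}$, the representatives converge (for each $\nu$, uniformly in $t\in C$) to a lift with $\pi_\omega\bigl((u^\nu_t)_\nu\bigr)=U_t$ \emph{exactly} for $t\in C$. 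Strong continuity and $\omega$-equicontinuity pass to this uniform limit by a routine $3\varepsilon$-argument, and gluing with the global lift on $L\setminus C$ delivers (1)--(4).

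The main obstacle is precisely this reconciliation: forcing the reconstructed, manifestly equicontinuous step-function lifts to represent $U$ \emph{exactly} on $C$ rather than only approximately. The telescoping exists to close this gap, and its delicate point is to keep the unitary corrections $\omega$-uniformly small, so that the limiting family stays $\omega$-equicontinuous while the error in (1) is driven to zero. Two subsidiary technical matters must also be handled with care in the locally compact (rather than $\mathbf{R}$) setting: the existence of the global Borel lift, and the separability of the range of $U$ needed for Lusin's theorem and for the step-function approximation. Both follow from $G$ being a Polish group with Radon Haar measure $\mu_G$ and from $M$ being separable, which let one work throughout in the Polish strong$^*$ unit ball of $M$.
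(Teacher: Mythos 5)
Your skeleton (Lusin reduction to a compact set on which $U$ is $\|\cdot\|^\sharp_{\psi^\omega}$-continuous, refining sampled step functions, shrinking compacts with summable measure losses) is the same as the paper's, but two steps fail as written. First, the ``global Borel lift via measurable selection'' is not available: the relation $\pi_\omega\bigl((u^\nu)_\nu\bigr)=U_t$ is defined through a limit along the free ultrafilter $\omega$, and $\lim_{\nu\to\omega}$ is not a Borel function of the sequence (the characteristic function of a free ultrafilter, viewed as a subset of Cantor space, is non-measurable), so the graph you would feed to Kuratowski--Ryll-Nardzewski or Jankov--von Neumann is neither Borel nor evidently analytic; Polishness of $U(M)$ in the strong* topology does not rescue this, because the target $M^\omega$ and the representation relation live outside the standard Borel framework. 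The paper never takes such a lift: it proves the lemma first on one compact set (with \emph{exact} representation there), and then obtains the almost-everywhere version of (1) and the Borel requirement of (3) by iterating the same construction on a disjoint compact exhaustion $\{D_n\}$ of $L$ with $\mu_G\bigl(L\setminus\bigcup_n D_n\bigr)=0$ and gluing the piecewise-continuous lifts, which is automatically Borel. You should replace your selection step by this iteration; nothing else in your outline uses the global lift.

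Second, your telescoped limit does not converge as stated. If the lift of $S^{(m+1)}$ is only ``$\omega$-uniformly close'' to that of $S^{(m)}$, i.e.\ close for $\nu$ in some $F_m\in\omega$, then for a \emph{fixed} $\nu$ the sequence $m\mapsto v^{(m),\nu}_t$ need not be Cauchy, since $\nu$ eventually drops out of the $F_m$ (in the paper's setting $F_k\subset[k,\infty)$, so $\bigcap_k F_k=\emptyset$); hence ``for each $\nu$, uniformly in $t\in C$, the representatives converge'' is unjustified. There are two standard repairs. (a) Hybridize: redefine the level-$(m+1)$ representative to equal the level-$m$ one for $\nu\notin F_m$; this modifies it only off a set of the ultrafilter, so it still represents $S^{(m+1)}_t$, and since each level has finitely many values one can choose a single $F_m$ for all cells — then the increments are small for \emph{every} $\nu$ and your $3\varepsilon$-argument goes through (one also checks that the $\nu$-wise strong* limits of unitaries are unitaries and that the limit sequence lies in $N_\omega$ and represents $U_t$). (b) Avoid the limit altogether, which is what the paper does: choose a strictly decreasing chain $F_k\in\omega$ with $F_k\subset[k,\infty)$ and set $u^\nu_t:=\tilde U^\nu_{t_{k,l}}$ (the sampled level-$k$ step lift, $t\in K^k_l$) for $\nu\in F_k\setminus F_{k+1}$. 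Exactness of $\pi_\omega\bigl((u^\nu_t)_\nu\bigr)=U_t$ at every $t\in C$ and $\omega$-equicontinuity then come from the uniform continuity of $U$, the cell diameters (conditions (1), (7) of the paper's Claim), and crucially condition (8), which transfers the distances $\|U_s-U_t\|^\sharp_{\psi^\omega}$ to the fixed representatives for $s,t$ in the finite sample sets $A_k$ and $\nu\in F_k$ — this is exactly the ``$\omega$-uniformly small corrections'' you identified as the delicate point, made precise. With these two repairs your argument is correct and is in substance the paper's; the interleaving over the bands $F_k\setminus F_{k+1}$ is the mechanism that replaces your telescope.
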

\begin{proof}
By the same argument as in the proof of Lemma 3.21 of \cite{MT}, it is shown that there exists a sequence $\{ L_n\}$ of compact subsets of $L$ satisfying the following conditions.

(1) We have $L_i\cap L_j=\emptyset $ for $i\not =j$.

(2) We have $\mu _G (L\setminus \bigcup _{j=1}^\infty L_j)=0$.

(3) The map $U|_{L_i}$ is continuous for each $i$.

Hence we may assume that $L$ is compact and that $U\mid _L$ is strongly continuous. Let $\psi\in M_*$ be a normal faithful state. For each $t\in L$, take a representing unitary $\{\tilde{U}_t^\nu\} _\nu $ of $U_t$. Note that $t\mapsto \tilde{U}_t^\nu$ may not be Borel measurable.  We first show the following claim.

\textbf{Claim}. For each $k\in \mathbf{N}$, there exist $N_k \in \mathbf{N}$, $F_k \in \omega $, a finite subset $A_k $ of $L$, a finite Borel partition $P^k:=\{K_l^k \}_{l=1}^{n_k}$ of $L$ and a compact subset $C_k$ of $L$ satisfying the following conditions. 

(1) For $s,t \in L$ with $d(s,t)\leq 1/N_k$, we have $\| U_s-U_t\| _{\psi^\omega }^\sharp <1/2k$. 

(2) We have $N_k >N_{k-1}, 2/N_k +1/(2N_{k-1})<1/N_{k-1}$ for all $k$.

(3) We have $[k, \infty ) \supset F_{k-1} \supsetneq F_k$ for all $k$.

(4) We have $A_k \supset A_{k-1}$ for all $k$.

(5) We have $\bigcup _{j=1}^{\infty} A_j \subset C_k$, $C_{k+1}\subset C_k$, $\mu _G(L\setminus C_k)<\epsilon (1-2^{-k})$ for all $k$ and $C_k \cap K^k_l$'s are also compact for all $k\in \mathbf{N}$, $l=1,\cdots n_k$.

(6) For each $k$, the partition $P^{k+1}$ is finer than $P^k$ and for each $k\in \mathbf{N}$, $l=1,\cdots , n_k$, we have $A_k \cap K_l^k =\{ t_{k,l}\} (=\{\mathrm{pt}\}) $.

(7) For $s,t \in K_l^k$, we have $d(s,t)\leq 1/N_k$.

(8) For $s,t \in A_k$, $\nu \in F_k$, we have $\| \tilde{U}_s^\nu-\tilde{U}_t^\nu \| ^{\sharp}_{\psi }< \| U_s-U_t\| _{\psi ^\omega }^\sharp + 1/(2k)$.

\textit{Proof of Claim}. First of all, choose a sequence $\{ N_k \}_{k=1}^\infty \subset \mathbf{N}$ so that the sequence satisfies conditions (1) and (2). Next, we take $P^k$'s. Assume that $P^1, \cdots P^k$ are chosen so that they satisfy condition (7) and that $P^{j+1}$ is a refinement of $P^j$ for $j=1, \cdots , k-1$. By compactness of $L$, there exists a family of finite balls $\{ B_f\} _{f\in F}$ of radius $1/(2N_{k+1})$ of $L$ which covers $L$. This $\{ B_f\} _{f\in F}$ defines a partition $\{ \tilde{B}_{f'}\}_{f'\in F'} $ of $L$. Then $P^{k+1}:=\{K_k^l \cap \tilde{B}_{f'}\} _{f'\in F', l=1, \cdots , n_k}$ is a refinement of $P^k$, which satisfies condition (7). Next, we take $C_k$'s. Set $C_0:=L$ and $C^0_1:=C_0$.  By Lusin's theorem, for each $l=1, \cdots , n_k$, $k\in \mathbf{N}$, there exists a compact subset $C_l^k$ of $K_l^k$ which satisfies the following conditions.

(1) We have $C^{k+1}_l \subset C^k _{l'}$ if $K^{k+1}_l \subset K^k_{l'}$.

(2) We have $\mu _G((K^{k+1}_l\cap C^k_{l'})\setminus C^{k+1}_l)<2^{-(k+1)}\epsilon /n_{k+1}$ if $K^{k+1}_l \subset K^k_{l'}$.

Set $C_k:=\bigcup _{l=1}^{n_k}C^k_l$ for each $k\in \mathbf{N}$. Since $C^k_l$'s are compact, $C_k$ is also compact. On the other hand, we have 
\begin{align*}
\mu _G(C_j \setminus C_{j+1}) &=\sum _{l=1}^{n_{j+1}}\mu _G((K_l^{j+1}\cap C_j)\setminus C_{j+1}) \\
                            &=\sum _{l=1}^{n_{j+1}}\mu _G((K_l^{j+1}\cap C^j_{l'})\setminus C_{j+1})\\
                            &=\sum _{l=1}^{n_{j+1}}\mu _G((K_l^{j+1}\cap C^j_{l'})\setminus C_{l}^{j+1}) \\
                            &< n_{j+1}\frac{1}{n_{j+1}}2^{-(j+1)}\epsilon \\
                            &=2^{-(j+1)}\epsilon .
\end{align*}
In the above inequality, for each $l=1, \cdots n_{j+1}$, $l'\in \{1, \cdots , n_j\}$ is the unique number with $C^{j+1}_l \subset C^j_{l'}$. Hence we have 
\begin{align*}
\mu _G(L\setminus C_k) &\leq \sum _{j=0}^{k-1}\mu _G(C_j\setminus C_{j+1}) \\
                     &< \epsilon \sum _{j=0}^{k-1} 2^{-(j+1)} \\
                     &=\epsilon (1-2^{-k}).
\end{align*}
These $C_k$'s satisfy $C_{k+1}\subset C_k$ and $\mu _G(L\setminus C_k)<\epsilon (1-2^{-k})$, and we also have $C_k \cap K^k_l$($=C_l^k$) 's are compact. Next, we take $A_k$'s. For each $C^1_{l_1}\supset C^2_{l_2} \supset \cdots$, there exists $t_{l_1l_2\cdots}\in \bigcap _{k=1}^\infty C^k_{l_k}$ by compactness of $C^k_l$'s. By induction on $k$, it is possible to choose $A_k=\{t_{k,l}\} _{l=1}^{n_k}$ so that $A_k \subset A_{k+1}$ and that $t_{k,l} =t_{l_1l_2\cdots ll_{k+1}\cdots }$, i.e., $l_k=l$. These $A_k$'s satisfy conditions (4), (5) and (6). We may choose $F_k$'s so that they satisfy conditions (3) and (8). This completes the proof of Claim. \qed

Now,  we return to the proof of Lemma \ref{7}. For $t\in L$, set $U_t^{k, \nu }:=\tilde{U}_{t_{k,l}}^\nu$ if $t\in K_l^k$, $u^\nu _t:=U_t^{k, \nu }$ for $\nu \in F_k \setminus F_{k+1}$. Set $C:=\bigcap _k C_k$. Then we have $\mu _G(L \setminus C)<\epsilon $ by condition (5) of Claim. Since $U_t^{k,\nu}$'s are continuous on each $K^k_l\cap C_k$($=C^k_l$) and $C^k_1, \cdots C^k_{n_k}$ are compact, $U^{k, \nu}_t$'s are continuous on each $C_k$. Hence they are continuous on $C$. Hence by the same argument as in the proof of Lemma 3.21 of \cite{MT}, the map $C\ni t\mapsto u_t^\nu $ is strongly continuous for each $\nu \in \mathbf{N}$. Then by the same argument as in Lemma 3.21 of \cite{MT}, it is possible to see that $\{ C\ni t\mapsto u_t^\nu \}_\nu$ is $\omega$-equicontinuous and that $\pi _\omega (u_t^\nu )=U_t$ for all $t\in C$. Now, we have chosen $\{u_t^\nu\}_\nu$ and $C$ so that they satisfy conditions (2),(3) and (4) of Lemma \ref{7} and the following condition.

(1)' We have $\pi _\omega ((u_t^\nu)_\nu )=U_t$ for $t\in C$.

Hence what remains to be done is to replace $\{u_t^\nu \}_\nu$ so that $\pi _\omega ((u_t^\nu )_\nu )=U_t$ for almost all $t\in L$. By repeating the same process, we can find a sequence of compact subsets $\{ D_n \}_{n=0}^{\infty}$ of $L$ and a sequence of strongly continuous maps $\{ D_n \ni t\mapsto u_t^{n, \nu}\in U(M)\} _{n,\nu=0}^\infty$ which satisfy the following conditions.

(1) We have $\mu _G(L\setminus (\bigcup _{n=0}^\infty D_n ))=0$ and $D_n $'s are mutually disjoint.

(2) We have $\pi _\omega ((u_t^{n, \nu})_\nu )=U_t$ for $t\in D_n$.

(3) We have $D_0=C$ and $u_t^{0,\nu }=u_t^\nu |_C$ for all $\nu \in \mathbf{N}$.

Set $u_t^\nu :=u_t^{n, \nu }$ for $t\in D_n$. This $\{ u_t^\nu \}_\nu$ satisfies all conditions of Lemma \ref{7}. 
\end{proof}

\subsection{The Averaging Technique}
\label{ar}
Next, we show the ``averaging technique''. For the $\mathbf{R}$-action case, this means that it is possible to embed $(M\otimes L^\infty ([0,S)), \theta \otimes \mathrm{translation})$ into $(M^{\omega}_{\theta}, \theta)$ for any $S>0$. This is a key lemma for the classification theorem. For the general case, the following lemma corresponds to this.

\begin{lemm}
\label{averaging}
Let $G$ be a locally compact abelian group and $\theta$ be an action with the Rohlin property of $G$ on a factor $M$. Let $L$ be a subset of $G$ with the following properties. 

\textup{(1)} There exists a compactly generated clopen subgroup $H$ of $G$, which is isomorphic to $\mathbf{R}^n \times K \times \mathbf{Z}^m$ for some compact group $K$ and non-negative integers $n$, $m$. 

\textup{(2)} The set $L$ is a subset of $H$. When we identify $H$ with $\mathbf{R}^n \times K \times \mathbf{Z}^m$, $L$ is of the form $[0,S_1) \times \cdots \times [0,S_n)\times K \times [0, N_1)\times \cdots \times [0,N_2)$. Note that $L$ can be thought of as a quotient group of $H$. 

Then there exist a unitary representation $\{ u_k\} _{k\in \hat{L}}$ of $\hat{L}$ on $M_{\omega , \theta}$ and an injective *-homomorphism $\Theta :M\otimes L^\infty(L) \to M^{\omega}_{\theta}$ with the following properties.

\textup{(1)} We have $\theta _t \circ \Theta =\Theta \circ (\theta _t \otimes \gamma _t)$. Here, $\gamma :H\curvearrowright L^\infty (L)$ denotes the translation.

\textup{(2)} We have $\Theta (a\otimes \langle \cdot ,k\rangle)=au_k$ for $a\in M$, $k\in \hat{L}$.

\textup{(3)} We have $\tau ^\omega \circ \Theta =\mathrm{id}_M\otimes \mu _L$,  where $\mu _L$ denotes the normalized Haar measure on $L$, which is the normalization of the restriction of a Haar measure on $G$, and $\tau ^\omega $ is the normal faithful conditional expectation as in Section \ref{The Rohlin Property}.
\end{lemm}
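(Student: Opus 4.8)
The plan is to reduce the statement to the construction of a single equivariant unitary representation and then to read off $\Theta$ from it. Since $L$ is a compact quotient group of $H$, the algebra $L^\infty(L)$ is generated by the characters $\langle\cdot,k\rangle$, $k\in\hat L$, which satisfy $\langle\cdot,k\rangle\langle\cdot,k'\rangle=\langle\cdot,k+k'\rangle$ and $\gamma_t(\langle\cdot,k\rangle)=\langle t,-k\rangle\langle\cdot,k\rangle$ for $t\in H$. Hence it suffices to produce a unitary representation $\{u_k\}_{k\in\hat L}$ of $\hat L$ inside $M_{\omega,\theta}$ with $\theta_t(u_k)=\langle t,-k\rangle u_k$ for all $t\in H$: one then sets $\Theta(a\otimes\langle\cdot,k\rangle):=au_k$ and extends by linearity and normality. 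As each $u_k$ lies in $M_\omega$ and therefore commutes with $M$, this $\Theta$ is multiplicative, and properties (1) and (2) hold by construction, matching the sign convention of the Rohlin property.

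Property (3) and injectivity I would obtain from a general feature of $\tau^\omega$. For any $x\in M_\omega$ the element $\tau^\omega(x)$ is the weak limit of an $\omega$-centralizing sequence, hence commutes with all of $M$; as $M$ is a factor this forces $\tau^\omega(M_\omega)\subset\mathbf C$. Applying this to $u_k$ and using $\tau^\omega\circ\theta_t=\theta_t\circ\tau^\omega$ gives $\theta_t(\tau^\omega(u_k))=\langle t,-k\rangle\tau^\omega(u_k)$ with $\tau^\omega(u_k)$ scalar, so $\tau^\omega(u_k)=0$ for $k\neq 0$ and $\tau^\omega(u_0)=1$. By orthogonality of characters this is exactly $\tau^\omega\circ\Theta=\mathrm{id}_M\otimes\mu_L$; composing with a faithful normal state $\varphi$ of $M$, the faithful state $\varphi\circ\tau^\omega$ pulls back to the faithful state $\varphi\otimes\mu_L$ on $M\otimes L^\infty(L)$, whence $\Theta$ is injective.

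To build the $u_k$ I would use the Rohlin property together with the structure of $\hat L$. Because $H$ is open, the restriction map $\hat G\to\hat H$ is surjective, so every $k\in\hat L\subset\hat H$ extends to a character $p\in\hat G$; the Rohlin property then provides a unitary $R\in M_{\omega,\theta}$ with $\theta_t(R)=\langle t,-p\rangle R$ for all $t\in G$, and in particular the required covariance on $H$. By the structure theorem $\hat L\cong\mathbf Z^n\times\hat K\times\prod_j\mathbf Z/N_j\mathbf Z$, so I would choose such Rohlin unitaries for a generating family of $\hat L$ and attempt to assemble them into a genuine representation.

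The main obstacle is precisely this last assembly. Two Rohlin unitaries with the same covariance differ by a unitary in the fixed-point algebra $(M_{\omega,\theta})^{\theta|_H}$, so the chosen generators only define a representation up to a $2$-cocycle valued in that algebra: their pairwise commutators and the relations $w_j^{N_j}=1$ (together with the relations coming from $\hat K$) need not hold on the nose. The heart of the proof is to re-choose the unitaries, correcting them by fixed-point unitaries, so that every relation of $\hat L$ holds exactly; this amounts to trivializing the cocycle and extracting the required roots inside the ultraproduct fixed-point algebra, exploiting its size. This step has no counterpart in \cite{MT}, where $L=\mathbf R/S\mathbf Z$ has monogenic dual $\hat L\cong\mathbf Z$ and a single Rohlin unitary automatically generates the representation; handling the non-cyclic relations arising from $K$ and from several independent generators is the genuinely new difficulty here.
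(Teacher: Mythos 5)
Your reduction of Lemma \ref{averaging} to the existence of an equivariant unitary representation of $\hat{L}$ in $M_{\omega,\theta}$ is exactly the paper's (it is the argument of Lemma 5.2 of \cite{MT}, stated here as Proposition \ref{1}), and your verification of property (3) and injectivity --- $\tau^\omega(M_\omega)\subset \mathbf{C}1$ by factoriality, equivariance of $\tau^\omega$, orthogonality of characters, and the pull-back of the faithful state $\varphi\circ\tau^\omega$ to $\varphi\otimes\mu_L$ --- is precisely where the factor hypothesis enters. The genuine gap is that the construction of $\{u_k\}_{k\in\hat{L}}$, which you correctly identify as the heart of the matter, is only named, not carried out: ``trivializing the cocycle and extracting the required roots\ldots exploiting its size'' is not an argument. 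The paper's two actual devices are concrete and different from what you suggest. For torsion relations no ``size'' of the ultraproduct is needed at all: if $\theta_t(w)=\langle t,p\rangle w$ with $p$ of order $l$, then $w^l$ lies in the fixed-point algebra, so Borel functional calculus inside $W^*(w^l)$ produces a unitary $v\in M_{\omega,\theta}^\theta\cap\{w\}'$ with $v^{-l}=w^l$, and $u:=vw$ satisfies $u^l=1$ exactly (Lemma \ref{1dim}). For the commutation relations among finitely many generators, the tool is Ocneanu's fast reindexation trick (Lemma 3.16 of \cite{MT}), applied iteratively to replace each new generator by a copy commuting with the von Neumann algebra generated by the previous ones (Lemma \ref{finitely gen}); there is no $2$-cocycle to trivialize by a cohomological theorem, and if by ``trivializing the cocycle'' you intended to invoke $2$-cohomology vanishing, that would be circular in this paper: Proposition \ref{2-cohomology} is \emph{deduced from} Lemma \ref{averaging}, not available before it.

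Even granting the finitely generated case, your outline is silent on the step that actually forces new work: $\hat{L}\cong \mathbf{Z}^n\times\hat{K}\times\prod_j \mathbf{Z}/N_j\mathbf{Z}$ need not be finitely generated, since $\hat{K}$ is merely a countable discrete abelian group. One cannot fix ``every relation of $\hat{L}$'' by finitely many corrections, and the exact representations of an exhausting chain $C_1\subset C_2\subset\cdots$ of finitely generated subgroups produced as above are not mutually compatible; correcting them coherently would require infinitely many reindexations, which does not converge. The paper resolves this with a diagonal index-selection argument along $\omega$: choose representing sequences $\{u_k^{\nu,n}\}_n$ for the $C_\nu$-representations, use Lemma 3.3 of \cite{MT} to get uniformity of the covariance over $L$, select a decreasing chain $F_\nu\in\omega$ along which multiplicativity, centrality and equivariance hold up to $1/\nu$ on finite sets $E_\nu$ exhausting $\hat{L}$, and splice $(u_k)_n:=u^{\nu,n}_{k_\nu}$ for $n\in F_\nu\setminus F_{\nu+1}$. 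Without this step (or an equivalent) your plan yields representations of the finitely generated pieces only, not of $\hat{L}$, so the proposal as written does not prove the lemma.
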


In order to show this, by the same argument as in Lemma 5.2 of \cite{MT} (in this part, we use the fact that $M$ is a factor), it is enough to show the following proposition.

\begin{prop}
 Let $\theta :G \curvearrowright M$ be an action with the Rohlin property of a locally compact abelian group $G$ on a factor $M$ and $L\subset H$ be subsets of $G$ as in the above lemma. Then there exists a family of unitaries $\{u_k\}_{k\in \hat{L}}\subset U(M_{\omega , \theta})$ with  the following properties.

\textup{(1)} We have $\theta _t(u_k)=\langle t, k\rangle u_k$ for $t\in H$.  

\textup{(2)} The map $k \mapsto u_{k}$ is an injective group homomorphism. 
 \label{1}
\end{prop}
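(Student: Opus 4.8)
The plan is to produce, for every $k\in\hat L$, an eigen-unitary via the Rohlin property, and then to assemble these into an honest group homomorphism by an inductive construction over the (countable, since $L$ is second countable) discrete group $\hat L$. First I would fix the dual picture. Because $H$ is a closed subgroup of $G$, restriction of characters $\hat G\to\hat H$ is surjective, and since $L=H/H_0$ is the compact quotient, $\hat L$ is exactly the subgroup of $\hat H$ of characters trivial on $H_0$. Thus each $k\in\hat L$ lifts to a character $\tilde k\in\hat G$ whose restriction to $H$ factors through $L$ and agrees with $k$, and applying the Rohlin property to $p=-\tilde k$ gives $v\in U(M_{\omega,\theta})$ with $\theta_t(v)=\langle t,\tilde k\rangle v=\langle t,k\rangle v$ for all $t\in H$. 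So eigen-unitaries exist for every $k$; moreover, once $k\mapsto u_k$ is a homomorphism satisfying (1), injectivity in (2) is automatic: if $u_k=1$ then $\langle t,k\rangle=\theta_t(u_k)u_k^*=1$ for all $t\in H$, and surjectivity of $H\to L$ forces $k=0$. Hence the entire problem is to organize the eigen-unitaries into a homomorphism.

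For this I would write $\hat L=\bigcup_i\Lambda_i$ as an increasing union of finitely generated subgroups, with $\Lambda_i\cong\mathbf Z^{a_i}\oplus T_i$ ($T_i$ finite) by the structure theorem, and build the $u_k$ by induction on $i$, adjoining one cyclic generator at a time. Suppose a representation has been defined on the separable abelian subalgebra $B_i:=\{u_k:k\in\Lambda_i\}''\subset M_{\omega,\theta}$. To adjoin a new generator $g$, with relation $mg=k_0\in\Lambda_i$ (where $m=0$ in the free case and $m$ is the order in the torsion case), I would first invoke the reindexation trick, exactly as in the central-sequence arguments of \cite{MT}, to find an eigen-unitary $v$ for $g$ lying in the relative commutant $B_i'\cap M_{\omega,\theta}$; this secures commutation of $u_g$ with all previously chosen $u_k$, which is the new feature forced by the higher-rank free part $\mathbf Z^n$ (for $G=\mathbf R$ one has $\hat L=\mathbf Z$ and a single generator, so commutation is vacuous).

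The remaining step, enforcing the cyclic relation $v^m=u_{k_0}$, and in particular the torsion relations $v^N=1$ coming from the $\mathbf Z^m$-factor, is where I expect the main obstacle, as it has no analogue in the torsion-free flow case. Here I would argue as follows. The element $c:=v^m u_{k_0}^*$ has eigenvalue $mg-k_0=0$, so it is a $\theta|_H$-fixed unitary; since $v\in B_i'$ commutes with $v^m$ and with $u_{k_0}$, and $B_i$ is abelian, one checks $c\in B_i'\cap M_{\omega,\theta}$ and $v$ commutes with $c$. Writing $c=\exp(ih)$ with $h=-i\log c$ self-adjoint in $W^*(c)$ and replacing $v$ by $u_g:=v\exp(-ih/m)$, the corrector $\exp(-ih/m)$ is again $\theta|_H$-fixed (as $\theta_t(h)=h$) and lies in $B_i'\cap M_{\omega,\theta}$, so it leaves the eigenvalue of $v$ unchanged and does not disturb any earlier relation, while $u_g^m=v^m\exp(-ih)=v^m c^{-1}=u_{k_0}$ now holds exactly.

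Carrying this out for each cyclic generator of $\Lambda_{i+1}$ extends the representation to $B_{i+1}$, and the inductive limit over $i$ yields $\{u_k\}_{k\in\hat L}$, with the compact-part contribution $\hat K$ absorbed automatically by the exhaustion $\bigcup_i\Lambda_i$ of the arbitrary countable group $\hat L$. The delicate points are precisely the two departures from \cite{MT}: that the Rohlin eigen-unitaries can be pushed into the relative commutant of $B_i$ by reindexation, and that the functional-calculus corrections can be made simultaneously consistent with the finitely many relations present at each stage. These two technical verifications constitute the heart of the proof, and I would carry them out in detail while treating the surrounding approximations as routine.
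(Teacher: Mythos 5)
Your proposal is correct, and it shares the paper's basic ingredients: lifting characters of $L$ to $\hat{G}$ and invoking the Rohlin property, exhausting the countable discrete group $\hat{L}$ by finitely generated subgroups via the structure theorem, fast reindexation to force commutation with previously constructed unitaries, and a functional-calculus root extraction to repair relations --- indeed the paper's Lemma \ref{1dim} is exactly your correction in the absolute case: there $w^l$ is $\theta$-fixed, and $w$ is multiplied by an $l$-th root $v$ of $(w^l)^{-1}$ taken in a fixed abelian algebra commuting with $w$. Where you genuinely diverge is the assembly step. The paper never extends a single representation: for each $C_\nu$ in the exhaustion it builds a fresh, mutually unrelated family $\{u_k^\nu\}_{k\in C_\nu}$ from a chosen basis of $C_\nu$ (so no extension problem against earlier choices ever arises), and then glues these into one homomorphism by an Ocneanu-style diagonal argument over the ultrafilter: representing sequences $u_k^{\nu,n}$ are fixed, nested sets $F_\nu\in\omega$ are extracted so that multiplicativity, centrality and equivariance hold up to $1/\nu$ on finite sets $E_\nu\subset C_\nu$, and one sets $(u_k)_n:=u_{k_\nu}^{\nu,n}$ for $n\in F_\nu\setminus F_{\nu+1}$. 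You instead keep every previously chosen unitary fixed and solve the extension problem one cyclic generator at a time, placing the new eigen-unitary in $B_i'\cap M_{\omega,\theta}$ by reindexation and enforcing $u_g^m=u_{k_0}$ by the relative root extraction $u_g=v\exp(-ih/m)$ with $h=-i\log(v^mu_{k_0}^*)$; your verifications are sound (the corrector is $\theta|_H$-fixed because $mg=k_0$, lies in the abelian algebra $W^*(B_i\cup\{v\})\subset B_i'$, and well-definedness of $u_{k+ng}:=u_ku_g^n$ follows from minimality of $m$), and your observation that injectivity is automatic is also used implicitly in the paper. What each route buys: yours eliminates the diagonal argument entirely, since each $u_k$ is defined once at a finite stage and never revisited, at the price of a division problem against fixed data, solvable here precisely because the relevant algebra is abelian and its unitaries admit roots; the paper's route never has to match new choices with old ones, and its reindexation-plus-diagonal machinery is the standard apparatus it reuses elsewhere (e.g.\ in Lemma \ref{10}) and remains available in settings where such extension problems are obstructed. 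One small imprecision to fix in your write-up: in the relation $mg=k_0$, the integer $m$ should be the minimal positive integer with $mg\in\Lambda_i$, which need not be the order of $g$ (e.g.\ $g$ of infinite order with $2g\in\Lambda_i$); your mechanism already handles this case as stated.
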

To show the above proposition, we need to prepare some lemmas. In the rest of this subsection, $\theta$, $G$, $H$ and $L$ are as in Proposition \ref{1}.

\begin{lemm}
\label{1dim}
Let $C$ be a subgroup of $\hat{L}$ isomorphic to $\mathbf{Z}/l\mathbf{Z}$. Then there exists a family of unitaries $\{u_k\}_{k\in C}\subset M_{\omega, \theta }$ with the following properties.

\textup{(1)} We have $\theta _t(u_k)=\langle t, k\rangle u_k$ for $t\in H$. 

\textup{(2)} The map $C\ni k \mapsto u_{k}$ is an injective group homomorphism. 
\end{lemm}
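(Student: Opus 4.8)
The plan is to reduce everything to a single Rohlin eigen-unitary attached to a generator of $C$ and then to correct it so that its $l$-th power becomes trivial. Fix a generator $k_0$ of $C$, so that $C=\{jk_0 : 0\le j<l\}$ and $lk_0=0$ in $\hat L$. Since $L$ is a quotient of $H$, the dual $\hat L$ sits canonically inside $\hat H$ as the annihilator of the kernel of the quotient map $H\to L$, and since $H$ is a closed subgroup of $G$ the restriction map $\hat G\to\hat H$ is surjective. Hence I can choose $p\in\hat G$ with $\langle t,-p\rangle=\langle t,k_0\rangle$ for all $t\in H$. Applying the Rohlin property to $p$ produces a unitary $u\in M_{\omega,\theta}$ with $\theta_t(u)=\langle t,-p\rangle u=\langle t,k_0\rangle u$ for every $t\in H$.

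Next I would record the only structural fact needed: the unitary $w:=u^l\in M_{\omega,\theta}$ is fixed by $\theta|_H$. Indeed $\theta_t(w)=\langle t,k_0\rangle^l w=\langle t,lk_0\rangle w=w$ for $t\in H$, because $lk_0=0$ in $\hat L\subset\hat H$. Consequently $\theta_t$ restricts to the identity on the abelian von Neumann algebra $W^*(w)$ for every $t\in H$, since $\theta_t$ is a normal $*$-automorphism fixing $w$ and therefore fixing the weakly closed $*$-algebra it generates.

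The core of the argument is the correction step. Because $w$ commutes with $u$, the algebra $W^*(w)$ is contained in the relative commutant $\{u\}'\cap M_{\omega,\theta}$; and in any von Neumann algebra a unitary admits an $l$-th root (choose a Borel branch of $\lambda\mapsto\lambda^{1/l}$ on the circle and apply Borel functional calculus), so I can select a unitary $z\in W^*(w)\subset M_{\omega,\theta}$ with $z^l=w^*$. By the previous paragraph $z$ is $\theta|_H$-fixed, and $z$ commutes with $u$. Setting $u_{k_0}:=uz$ I then obtain a unitary of $M_{\omega,\theta}$ satisfying $\theta_t(u_{k_0})=\langle t,k_0\rangle u_{k_0}$ for $t\in H$ and, since $u$ and $z$ commute, $u_{k_0}^l=u^l z^l=w w^*=1$.

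Finally I would define $u_{jk_0}:=u_{k_0}^{\,j}$ for $0\le j<l$. The relation $u_{k_0}^l=1$ makes this well defined on $C\cong\mathbf Z/l\mathbf Z$ and yields a group homomorphism $C\ni k\mapsto u_k$ with $\theta_t(u_k)=\langle t,k\rangle u_k$ for $t\in H$, which is property (1). For injectivity in (2), if $u_{jk_0}$ were a scalar then $\langle t,jk_0\rangle=1$ for all $t\in H$, forcing $jk_0=0$ in $\hat H$ and hence in $\hat L$; so the homomorphism has trivial kernel. The only genuinely delicate point is the correction step, where the correcting unitary must lie simultaneously in the commutant of $u$ (so that the $l$-th powers multiply) and in the $\theta|_H$-fixed points (so that the eigenvalue relation survives); choosing the root inside $W^*(w)$ is exactly what arranges both conditions at once.
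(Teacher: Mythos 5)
Your proof is correct and takes essentially the same route as the paper's: produce a Rohlin eigen-unitary for a generator of $C$, correct it by an $l$-th root of the inverse of its $l$-th power chosen to commute with it and be fixed by $\theta|_H$, and take powers. Your explicit extension of the character from $\hat{L}\subset\hat{H}$ to $\hat{G}$ and the Borel functional calculus inside $W^*(u^l)$ merely spell out steps the paper leaves implicit (the paper asserts the root $v\in M_{\omega,\theta}^\theta\cap\{w\}'$ directly), and your observation that only $\theta|_H$-invariance is needed is, if anything, slightly more careful than the paper's wording.
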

\begin{proof}
Let $p$ be a generator of $C$. Since $\theta $ has the Rohlin property, there exists a unitary $w$ of $M_{\omega, \theta}$ satisfying $\theta_t(w)=\langle t, p \rangle w$ for $t\in H$. Since $w^l \in M_{\omega, \theta} ^\theta$, there exists a unitary $v$ of $M_{\omega, \theta} ^\theta \cap \{w\}'$ such that $v^{-l}=w^l$. Set $u:=vw$ and $u_k:=u^k$. Then the family $\{ u_k \}_{k\in \mathbf{Z}/l\mathbf{Z}}$ does the job.   
\end{proof}

By the same argument as in the proof of Lemma 3.16 of \cite{MT}, we have the following lemma. See also Lemma 5.3 of Ocneanu \cite{O}, Lemma 3.16 of \cite{MT}.

\begin{lemm}\textup{(Fast reindexation trick.)}
Let $\theta $ be an action of $G$ on a von Neumann algebra $M$ and let $F\subset M^\omega$ and $N\subset M_\theta^\omega$ be separable von Neumann subalgebras. Suppose that the subalgebra $N$ is globally invariant by $\theta $. Then there exists a faithful normal *-homomorphism $\Phi :N\to M_\theta^\omega $ with the following properties.
\begin{gather*}  
\Phi =\mathrm{id} \ \text{on $F\cap M$,} \\
\Phi (N\cap M_{\omega, \theta })\subset F'\cap M_{\omega, \theta}, \\
\tau ^\omega (\Phi (a)x)=\tau ^\omega (a)\tau ^\omega (x) \ \text{for all $a\in N$, $x\in F$,} \\
\theta_t \circ \Phi =\Phi \circ \theta _t \ \text{on $N$ for all $t \in L$.} 
\end{gather*}

\end{lemm}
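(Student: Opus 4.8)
The plan is to build $\Phi$ by a fast reindexation (diagonal) argument along $\omega$, in the spirit of Lemma 5.3 of \cite{O} and Lemma 3.16 of \cite{MT}; the genuinely new point is to carry the $\omega$-equicontinuity defining $M_\theta^\omega$ and the $\theta$-equivariance through the reindexation, which for a general locally compact $G$ forces one to phrase equicontinuity through the invariant metric $d$ and to build the corresponding good sets into the diagonal choice.

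First I would fix the faithful normal state $\varphi$ and extract countable data. Since $N$ is separable, choose sequences $\{a_i\}_i$ and $\{b_p\}_p$ strong$^*$-dense in the unit balls of $N$ and of $N\cap M_{\omega,\theta}$ respectively, with representatives $(a_i^\nu)_\nu,(b_p^\nu)_\nu\in N_\omega$ of norm $\le 1$ representing them under $\pi_\omega$. Since $F$ and $M$ are separable, choose $\{x_j\}_j$ strong$^*$-dense in the unit ball of the von Neumann algebra $(F\cup M)''\subset M^\omega$, with representatives $(x_j^\nu)_\nu$, and fix a countable norm-dense subset $\{\psi_s\}_s$ of $M_*$. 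Finally fix a countable dense subset $\{t_r\}_r$ of $L$; as $N$ is globally $\theta$-invariant I enlarge $\{a_i\}_i$ to contain every $\theta_{t_r}(a_i)$ with representatives $(\theta_{t_r}(a_i^\nu))_\nu$, keeping the family countable and generating $N$.

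The heart of the argument is the diagonal choice of a strictly increasing $m\colon\mathbf{N}\to\mathbf{N}$, using three facts valid for a \emph{fixed} second index. Since $b_p\in M_{\omega,\theta}\subset M_\omega$ commutes with $M$, one has $\lim_{\mu\to\omega}\|[b_p^\mu,x_j^\nu]\|_\varphi^\sharp=0$ for each fixed $\nu$; since $a_i^\mu\to\tau^\omega(a_i)$ weakly along $\omega$, one has $\lim_{\mu\to\omega}\psi_s(a_i^\mu x_j^\nu)=\psi_s(\tau^\omega(a_i)x_j^\nu)$ for each fixed $\nu$; and since $a_i\in M_\theta^\omega$, for each $\varepsilon>0$ there is $\delta=\delta(i,\varepsilon)>0$ with $E_{i,\varepsilon,\delta}:=\{\mu:\|\theta_t(a_i^\mu)-a_i^\mu\|_\varphi^\sharp<\varepsilon\text{ for }d(0,t)<\delta\}\in\omega$. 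By induction on $n$ I then choose $m(n)>m(n-1)$ inside the finite intersection of $\omega$-sets enforcing, for all $i,j,p,s,k\le n$, the estimates $\|[b_p^{m(n)},x_j^n]\|_\varphi^\sharp<1/n$, $|\psi_s(a_i^{m(n)}x_j^n)-\psi_s(\tau^\omega(a_i)x_j^n)|<1/n$ and $m(n)\in E_{i,1/k,\delta(i,1/k)}$. Setting $\Phi(a_i):=\pi_\omega((a_i^{m(n)})_n)$, and likewise on the $b_p$, and extending multiplicatively defines the candidate map.

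The verifications then run as follows. The clause $m(n)\in E_{i,1/k,\delta(i,1/k)}$ for $i,k\le n$ makes every reindexed sequence again $\omega$-equicontinuous, so $\Phi$ takes values in $M_\theta^\omega$; the commutator estimate makes each $\Phi(b_p)$ commute with both $M$ and $F$, so $\Phi$ carries $N\cap M_{\omega,\theta}$ into $F'\cap M_\omega\cap M_\theta^\omega=F'\cap M_{\omega,\theta}$ after passing to strong$^*$-limits. The second estimate gives $\psi_s(\tau^\omega(\Phi(a_i)x_j))=\psi_s(\tau^\omega(a_i)\tau^\omega(x_j))$ for every $s$, whence $\tau^\omega(\Phi(a)x)=\tau^\omega(a)\tau^\omega(x)$ for all $a\in N$, $x\in F$ by density and separate normality; taking $x=1$ yields $\tau^\omega\circ\Phi=\tau^\omega|_N$, so $\Phi$ is faithful because $\tau^\omega$ is. A constant sequence is unchanged by reindexation, giving $\Phi=\mathrm{id}$ on $F\cap M$; and since reindexation commutes with applying $\theta_{t_r}$ coordinatewise while every $\theta_{t_r}(a_i)$ is a generator, $\Phi\circ\theta_{t_r}$ and $\theta_{t_r}\circ\Phi$ are normal maps agreeing on generators of $N$, hence equal, and strong continuity of $t\mapsto\theta_t$ on $M_\theta^\omega$ upgrades this from $\{t_r\}$ to all $t\in L$. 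The step I expect to be the main obstacle is the simultaneous bookkeeping that keeps the reindexed sequences inside $N_\omega$ while remaining $\omega$-equicontinuous: checking, via the Ando--Haagerup description of $N_\omega$, that a fast reindexation preserves $N_\omega$-membership is precisely what makes $\Phi$ well defined and normal, and it is here, together with the equicontinuity, that the general group $G$ requires the invariant metric $d$ in place of the arguments specific to $\mathbf{R}$ in \cite{MT}.
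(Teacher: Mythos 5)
Your proposal is correct and is essentially the paper's own route: the paper proves this lemma simply by invoking the index-selection/diagonal argument of Lemma 5.3 of \cite{O} and Lemma 3.16 of \cite{MT}, which is exactly the reindexation scheme you carry out, including the one genuinely new point here, namely threading the $\omega$-equicontinuity through the diagonal choice via the invariant metric $d$ and a countable dense subset of $L$, with the final upgrade to all $t\in L$ by continuity. One bookkeeping remark: to land in $M_{\omega,\theta}$ rather than merely in $M'\cap M^\omega\cap M^\omega_\theta$ (these differ for non-finite $M$), you should also insert the single-index centralizing conditions $\|[\psi_s,b_p^{m(n)}]\|<1/n$, together with the algebraic-compatibility estimates on a countable weakly dense $\mathbf{Q}(i)$-$*$-subalgebra that make $\Phi$ well defined and multiplicative --- conditions of exactly the type you already enumerate, absorbed into the same finite intersection of $\omega$-sets.
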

\begin{lemm}
\label{finitely gen}
Let $C$ be a subgroup of $\hat{L}$ of the form $\mathbf{Z}^n\times F$, where $F:=\bigoplus _{k=1}^m \mathbf{Z}/(l_k\mathbf{Z})$ is a finite abelian group. Then there exists a family of unitaries $\{u_k\}_{k\in C} \subset M_{\omega , \theta }$ which satisfies the following conditions. 

\textup{(1)} We have $\theta _t(u_k)=\langle t, k\rangle u_k$ for $t\in H$. 

\textup{(2)} The map $k \mapsto u_{k}$ is an injective group homomorphism.
\end{lemm}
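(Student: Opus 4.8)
The plan is to build the representation $\{u_k\}_{k\in C}$ one generator at a time, reducing at each stage to the cyclic case already handled in Lemma \ref{1dim} (for a torsion generator) or to the Rohlin property itself (for a free generator), and then gluing the new unitary to those already constructed by the fast reindexation trick so as to force commutativity. Write $C=\mathbf{Z}^n\times F$ and fix generators $g_1,\dots,g_{n+m}$, where $g_1,\dots,g_n$ are free and $g_{n+1},\dots,g_{n+m}$ are torsion, $g_{n+i}$ of order $l_i$. I would construct, by induction on $r$, a covariant group homomorphism $\{u_k\}_{k\in C_r}$ on $C_r:=\langle g_1,\dots,g_r\rangle$ satisfying condition (1). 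Observe first that condition (2) is then automatic: if $k\neq k'$ in $\hat L$ then $\langle t,k\rangle\neq\langle t,k'\rangle$ for some $t\in H$, so applying $\theta_t$ to a hypothetical equality $u_k=u_{k'}$ and using (1) would force $\langle t,k\rangle=\langle t,k'\rangle$, a contradiction. Thus only the covariant homomorphism must be produced.

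For the inductive step, suppose $\{u_k\}_{k\in C_{r-1}}$ is in hand. I would first produce a single covariant unitary $w\in M_{\omega,\theta}$ for the new generator $g:=g_r$, with $\theta_t(w)=\langle t,g\rangle w$ for all $t\in H$. If $g$ is torsion of order $l$, Lemma \ref{1dim} supplies such a $w$ with the extra relation $w^l=1$. If $g$ is free, view $g$ as a character of $H$ trivial on the kernel of $H\to L$; since $H$ is open in $G$, the restriction map $\hat G\to\hat H$ is surjective, so $g$ extends to some $\tilde g\in\hat G$, and applying the Rohlin property to $-\tilde g$ yields $w\in M_{\omega,\theta}$ with $\theta_t(w)=\langle t,\tilde g\rangle w$ for $t\in G$, hence $\theta_t(w)=\langle t,g\rangle w$ for $t\in H$. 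Now set $N:=\{w\}''\subset M_\theta^\omega$, which is globally $\theta$-invariant since $w$ is an eigenvector, and $F:=(M\cup\{u_k:k\in C_{r-1}\})''\subset M^\omega$, which is separable. Applying the fast reindexation trick to $N$ and $F$ gives a faithful normal $*$-homomorphism $\Phi$, and I would set $u_g:=\Phi(w)\in F'\cap M_{\omega,\theta}$. By construction $u_g$ commutes with $M$ and with every $u_k$, $k\in C_{r-1}$, and if $g$ is torsion then $u_g^l=\Phi(w^l)=\Phi(1)=1$; together with the commutation this extends the homomorphism from $C_{r-1}$ to $C_r=C_{r-1}\oplus\langle g\rangle$ via $u_{k+jg}:=u_k u_g^j$.

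The remaining point, and the one requiring care, is covariance of $u_g$ over all of $H$. The reindexation trick only guarantees $\theta_t\circ\Phi=\Phi\circ\theta_t$ for $t\in L$, so a priori I obtain $\theta_t(u_g)=\Phi(\theta_t(w))=\langle t,g\rangle u_g$ only for $t\in L$. To upgrade this I would use that $u_g\in M_{\omega,\theta}$, whence $\xi(t):=\theta_t(u_g)u_g^{-1}$ is strong$^*$-continuous and satisfies the cocycle identity $\xi(t+s)=\theta_t(\xi(s))\xi(t)$. Since $\xi(s)=\langle s,g\rangle$ is scalar for $s\in L$, this identity propagates scalarity from $L$ to $L+L$ and, by induction, to the whole subgroup generated by $L$; as $L=[0,S_1)\times\cdots\times[0,S_n)\times K\times[0,N_1)\times\cdots\times[0,N_m)$ generates $H$ (each of $[0,S_i)$, $K$, $[0,N_j)$ generates the corresponding factor), I conclude $\theta_t(u_g)=\langle t,g\rangle u_g$ for all $t\in H$. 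The main obstacle of the whole argument is the commutativity of the unitaries attached to distinct generators, which is exactly what the fast reindexation trick is designed to overcome; the covariance-extension just described is the secondary technical wrinkle caused by the reindexation intertwining $\theta$ only over $L$ rather than over all of $H$.
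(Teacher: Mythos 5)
Your proof is correct and takes essentially the same route as the paper's: one covariant eigenunitary per generator (Lemma \ref{1dim} for the torsion generators, the Rohlin property --- after extending the character from $H$ to $\hat{G}$ --- for the free ones), glued by the fast reindexation trick so that the unitaries mutually commute. The paper's terse proof leaves implicit the two points you spell out, namely that injectivity is automatic from covariance and that covariance, which the reindexation intertwines only over $L$, propagates to all of $H$ because $L$ generates $H$; both of your arguments for these are valid.
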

\begin{proof}
Let $\{p_1,\cdots ,p_n,q_1,\cdots ,q_m\}$ be a base of $\hat{C}$. Then there exist unitaries $\{u_i\}_{i=1}^{n}$ and $\{v_j\}_{j=1}^m$ with $\theta _t(u_i)=\langle t,p_i \rangle u_i$, $\theta _t(v_j)=\langle t, q_j \rangle v_j$ for $t\in H$. By Lemma \ref{1dim}, we may assume that $ v_j^{l_j}=1$. By using the fast reindexation trick, it is possible to choose $\{u_i\}_{i=1}^{n}$ and $\{v_j\}_{j=1}^m$ so that they mutually commute.   
\end{proof}

Now, we prove Proposition \ref{1}.

\begin{proof}
Let $ \psi \in M_*$ be a normal faithful state and let $\Phi =\{ \phi _m \} $ be a countable dense subset of the unit ball of $M_*$. There exists an increasing sequence $\{ C_\nu \}$ of finitely generated subgroups of $\hat{L}$ satisfying $\hat{L}=\bigcup_{\nu=1}^\infty C_\nu$. Then by the structure theorem of finitely generated abelian groups and the above lemma, for each $\nu$, there exists a family of unitaries $\{ u_{k}^\nu \} _{k\in C_\nu }\subset U(M_{\omega , \theta})$ with $C_\nu \ni k \mapsto u_{k}^\nu $ satisfying conditions (1) and (2) of Lemma \ref{finitely gen}. For each $k\in \hat{L}$, set a sequence $\{k_\nu \}$ of $\hat{L}$ as follows.
\begin{equation*}
 k_\nu =\begin{cases}
           k & \text{if $ k \in C_\nu $} \\
           0 & \text{if $ k \not \in C_\nu $ .}
        \end{cases} 
\end{equation*} 
For each $\nu\in \mathbf{N}$, $k \in C_\nu $, take a representing sequence $\{u_{k}^{\nu, n} \}$ of $u_{k}^\nu $. Take a sequence $\{ E_\nu \}$ of finite subsets of $\hat{L}$ satisfying $\bigcup E_\nu =\hat{L}$, $E_\nu \subset C_\nu $ for all $\nu \in \mathbf{N}$. By Lemma 3.3 of \cite {MT}, the convergence 
\[ \lim _{ n \to \omega } \| \theta _t(u_{k }^{\nu ,n}) - \langle t, k \rangle u_{k }^{\nu ,n} \| ^{\sharp}_\psi =0 \]
is uniform for $t\in L$. Hence it is possible to choose $F_\nu \in \omega $ ($\nu =1,2,3, \cdots $) so that 
\begin{gather}
F_\nu \subsetneq F_{\nu -1} \subset [\nu -1, \infty ), \  \nu =2,3,\cdots , \\
\| u_k^{\nu ,n}u_l^{\nu ,n}-u_{k+l}^{\nu ,n} \| _{\psi}^\sharp <1/\nu , \ k,l \in E_\nu , \ n\in F_\nu ,\\
\| [ \phi _m, u_k^{\nu ,n}] \| <1/\nu , \ k\in E_\nu ,\ m\leq \nu ,\ n \in F_\nu ,\\
\| \theta _t(u_k^{\nu ,n}) -\langle t, k \rangle u_k^{\nu ,n} \| _\psi ^\sharp <1/\nu , \ k \in E_\nu , \ t\in L, \ n\in F_\nu .  
\end{gather}
Set $(u_k)_n:=u_{k_\nu}^{\nu, n}$ for $n\in F_\nu \setminus F_{\nu +1}$. We show that $u_k:=\{(u_k)_n\} $ is a desired family of unitaries. 

We will show $u_k \in M_{\omega }$. Fix $\mu \in \mathbf{N}$ and $k\in \hat{L}$. Then there exists $\nu \geq \mu $ with $k\in E_\nu$. Then for $n \in F_\nu$, there exists a unique $\lambda \geq \nu$ satisfying $n\in F_\lambda \setminus F_{\lambda +1}$. Then by the inequality (3), we have 
\[ \|[\phi _m, (u_k)_n ] \| =\| [\phi _m, (u_{k_\lambda }^{\lambda ,n} )] \| <1/\lambda \leq 1/\mu \]
for $m \leq \mu $. Thus we have $ u_k \in M_\omega$.

In a similar way to the above, we obtain $\theta _t(u_k)=\langle t,k \rangle u_k$, using the inequality (4). It is also possible to show that the map $\hat{L}\ni k \mapsto u_k$ is a unitary representation by using the inequality (2).
\end{proof}

\subsection{Cohomology Vanishing}
\label{cv}

By using Lemma \ref{averaging}, we show the following two propositions.  See also Theorems 5.5  and 5.11 of \cite{MT}, respectively. 

\begin{prop}
\label{2-cohomology}
\textup{(2-cohomology vanishing)} Let $(\theta , c )$ be a Borel cocycle action of a locally compact abelian group $G$ on a factor $M$. Suppose that $(\theta , c)$ has the Rohlin property. Then the $2$-cocycle $c$ is a coboundary, that is, there exists a Borel unitary map $v:G\to U(M)$ such that 
\[ v_t\theta _t (v_s)c(t,s )v_{t+s}^* =1 \]
for almost every $(t,s)\in G^2 $.
\end{prop}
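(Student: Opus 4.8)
The plan is to trivialize $c$ \emph{locally}, on an exhausting family of compactly generated clopen subgroups, by writing down an explicit coboundary inside $M^\omega_\theta$ through the averaging embedding of Lemma \ref{averaging}, and then to descend this to a genuine Borel unitary path on $M$ and patch. Since $M$ (hence $G$) is separable, $G$ is metrizable and second countable, so it is $\sigma$-compact; I would therefore fix an increasing sequence $H_1\subset H_2\subset\cdots$ of compactly generated — hence open, hence clopen — subgroups with $\bigcup_i H_i=G$, each isomorphic to $\mathbf{R}^{n_i}\times K_i\times\mathbf{Z}^{m_i}$, together with a fundamental domain $L_i\subset H_i$ as in Lemma \ref{averaging}. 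It then suffices to trivialize each $c|_{H_i}$ compatibly and pass to the limit; note that the discrete directions $\mathbf{Z}^{m_i}$ are already incorporated into $L_i$, so no separate treatment of a discrete quotient is needed beyond this exhaustion.

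The heart of the argument is a model computation on a fixed $H=H_i$, $L=L_i$. Applying Lemma \ref{averaging} gives an injective $\Theta\colon M\otimes L^\infty(L)\to M^\omega_\theta$ with $\theta_t\circ\Theta=\Theta\circ(\theta_t\otimes\gamma_t)$ and $\Theta(a\otimes 1)=a$, where $\gamma$ is the translation action and $L$ is regarded as the compact quotient group of $H$. Writing points of $L$ as $x$ and computing $x-t$ in $L$ via fundamental-domain representatives, I would define the Borel unitary map $V\colon H\to U(M^\omega_\theta)$ by
\[ V_t:=\Theta\bigl(x\mapsto c(t,x-t)^*\bigr). \]
A direct computation that invokes only the $2$-cocycle identity $\theta_t(c(s,u))c(t,s+u)=c(t,s)c(t+s,u)$ shows that the $L^\infty(L)$-valued integrand of $V_t\,\theta_t(V_s)\,c(t,s)\,V_{t+s}^*$ is identically $1$ \emph{whenever no wraparound occurs} in forming $x-t$ and $x-t-s$, so that $V$ trivializes $c$ exactly away from a boundary region. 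This wraparound defect is precisely the ``not enough compact quotients'' phenomenon flagged in the introduction: $c$ does not descend to $L$, so the identity is only valid on the set $I$ of pairs whose translates stay inside $L$, which by property (5) of Lemma \ref{10} can be made to have measure at least $(1-\delta)\mu_G(L\times L)$.

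To convert this ultraproduct-level coboundary into Borel data on $M$, I would feed $V$ into Lemma \ref{10}. This yields lifts $\{v^\nu_t\}$ for which, by property (7), the cocycle expression $\|v^\nu_t\theta_t(v^\nu_s)c(t,s)(v^\nu_{t+s})^*-1\|_\phi^\sharp$ converges uniformly on $I$ to $\|V_t\theta_t(V_s)c(t,s)V_{t+s}^*-1\|_{\phi^\omega}^\sharp=0$. Hence, for each $\delta>0$ and each finite $\Phi\subset M_*^+$, I obtain a genuine Borel path $t\mapsto v_t\in U(M)$ that trivializes $c$ within $\delta$ (measured in the $\|\cdot\|^\sharp_\phi$-seminorms) on a set of measure at least $1-\delta$.

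The main obstacle I expect is upgrading these approximate, Borel, near-coboundaries into a single \emph{exact} coboundary defined almost everywhere on all of $G$, compatibly across the family $\{H_i\}$. I would handle this by an Evans--Kishimoto-type intertwining: build a sequence of successive perturbations, each correcting its predecessor on a larger subset and with geometrically decaying defect in $\|\cdot\|^\sharp$, arranged so that the infinite product converges strong-$*$ to a Borel unitary path $v$ with $v_t\theta_t(v_s)c(t,s)v_{t+s}^*=1$ for almost every $(t,s)$. Controlling the Borel regularity and the convergence of this telescoping product, while keeping the trivializations on $H_i$ consistent as $i$ grows, is the delicate step; factoriality of $M$ enters through Lemma \ref{averaging} and in matching the phases at each stage of the intertwining.
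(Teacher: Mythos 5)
Your overall architecture --- exhaustion by compactly generated clopen subgroups $H_i$, the averaging embedding $\Theta$ of Lemma \ref{averaging}, the explicit candidate $V_t=\Theta\bigl(x\mapsto c(t,x-t)^*\bigr)$, lifting via Lemma \ref{10}, then a successive-approximation scheme --- is exactly the paper's route (its adaptation of Theorem 5.5 of Masuda--Tomatsu), but there is one concrete misstep at the heart of it. The claim that $V$ trivializes $c$ \emph{exactly} on a large set of pairs $(t,s)$, i.e.\ that $\| V_t\theta_t(V_s)c(t,s)V_{t+s}^*-1\| ^\sharp_{\phi^\omega}=0$ on a set $I$ of measure $(1-\delta)\mu_G(L\times L)$, is false: the wraparound lives in the fiber variable $x\in L$, not in the pair $(t,s)$. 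For any $t$ with a nonzero component in the $\mathbf{R}^n\times\mathbf{Z}^m$ directions, $t+L$ differs from $L$ on a set of positive measure, so the $L^\infty(L)$-valued integrand differs from $1$ on a positive-measure set of $x$ for essentially \emph{every} pair, and $V_t\theta_t(V_s)c(t,s)V_{t+s}^*\neq 1$ in $M^\omega$ throughout. What is true --- and what the paper proves in item (2) of Subsection \ref{cv}, the analogue of Lemma 5.4 of Masuda--Tomatsu --- is that the defect is uniformly small in norm on all of $T\times T$: it is bounded by $2\|\phi\|^{1/2}\,\mu_G\bigl(L\setminus\bigcap_{t\in T+T}(t+L)\bigr)^{1/2}/\mu_G(L)^{1/2}$, forced below $\delta/(6\mu_G(T)^2)$ by taking $L$ enormous relative to $T+T$. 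You have also misread the role of $I$ in Lemma \ref{10}: it is not the no-wraparound locus but the compact set of pairs on which the lifted family is $\omega$-equicontinuous and the norm-limit formula (7) holds; its job is to transfer the uniformly small (not zero) ultraproduct estimate to the genuine lifts.

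This matters because the misstatement hides where all the quantitative work of the proof actually lives. Since the defect never vanishes at any finite stage, your closing ``Evans--Kishimoto-type'' paragraph cannot be a soft patching argument: at stage $n$ the error is only small after integration over $T_n$, and the corrections (the unitaries $W$ in item (3) of Subsection \ref{cv}) contribute factors of $\mu_G(T_n)$, so convergence of the telescoping product requires precisely the paper's choices (8)--(11): $T_k+L_k\subset T_{k+1}$ with $\bigcup_k T_k=G$, the relative measure of $L_k\setminus\bigcap_{t\in T_k+T_k}(t+L_k)$ beaten down below $\bigl(\epsilon_k/(6\mu_G(T_k)^2)\bigr)^2$, and the summability $\sum_{k=n+1}^\infty\sqrt{13\mu_G(T_k)\epsilon_k}<\epsilon_n$. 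These sequences, not factoriality or phase-matching, are the delicate point you defer; with the exact-vanishing claim corrected to the uniform $\delta/(6\mu_G(T)^2)$ bound on $T\times T$ and the sequences $(T_k,L_k,\epsilon_k)$ supplied, your plan coincides with the paper's proof.
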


Furthermore, if $\| c(t,s)-1\| ^\sharp _\phi $, $\|[c(t,s ), \phi ]\|$ ($\phi \in M_*$) are small, then it is possible to choose $v_t$ so that $\| v_t-1\| ^\sharp _\phi $ and $\|[v_t, \phi ] \|$ are small. We will explain this later.

\begin{prop}
\textup{(Approximate 1-cohomology vanishing)} \label{ap 1-coho} Let $\theta $ be an action with the Rohlin property of a locally compact  abelian group $G$ on a factor $M$. Let $\epsilon $, $\delta $ be positive numbers and $\Phi $ be a compact subset of the unit ball of $M_*$. Let $H$ be a compactly generated clopen subgroup of $G$, which is isomorphic to $\mathbf{R}^n \times K\times \mathbf{Z}^m$ for some compact  abelian group $K$ and non-negative integers $n$, $m$.  Let $T$, $L$  be subsets of $H$ which satisfy the following conditions.

\textup{(1)} When we identify $H$ with $ \mathbf{R}^n \times K\times \mathbf{Z}^m$, $L$ is of the form 
\[ [0,S_1)\times \cdots \times [0, S_n) \times K\times [0,N_1)\times \cdots \times [0,N_m),\]
 which implies that $L$ is a compact quotient of $H$.

\textup{(2)} We have
\[ \frac{\mu _G\bigl( \bigcap _{t\in T}(t+L)\bigr) }{\mu _G(L)} >1-4\epsilon ^2. \]
 Then for any $\theta$-cocycle $u_t$ with
\[ \frac{1}{\mu _G(L)} \int _L \|[u_t, \phi ]\| \ d\mu _G(t)<\delta \] 
for all $\phi \in \Phi$, there exists a unitary $w\in M$ such that 
\[ \| [w, \phi] \| <3\delta \ \mathrm{for \ all} \ \phi \in \Phi ,\]
\[ \| \phi  \cdot (u_t\theta _t(w)w^*-1)\| <\epsilon , \] 
\[ \| (u_t\theta _t(w)w^*-1) \cdot \phi \| <\epsilon \ \mathrm{for \ all} \ t\in T, \phi \in \Phi .\] 
\end{prop}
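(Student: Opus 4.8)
The plan is to realize the cohomology vanishing \emph{exactly} inside the ultraproduct $M^\omega_\theta$ and then transfer an approximate version back to $M$ by a reindexation argument. Since $M$ is a factor and $\theta$ has the Rohlin property, Lemma \ref{averaging} furnishes an injective $*$-homomorphism $\Theta\colon M\otimes L^\infty(L)\to M^\omega_\theta$ intertwining $\theta_t\otimes\gamma_t$ (translation on the compact quotient group $L$) with $\theta_t$, and satisfying $\tau^\omega\circ\Theta=\mathrm{id}_M\otimes\mu_L$. Viewing the given Borel $\theta$-cocycle $s\mapsto u_s$ as a unitary element of $M\otimes L^\infty(L)=L^\infty(L,M)$, I would set $W:=\Theta(s\mapsto u_s)\in U(M^\omega_\theta)$. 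Because $M\subseteq M^\omega_\theta$ as constant sequences, $u_t$ lives in $M^\omega_\theta$ and $u_t\theta_t(W)W^*$ is well defined and continuous in $t$. Note that $W$ is genuinely unitary, so no polar-decomposition correction is needed, in contrast to a naive averaging of the $u_s$.

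The key computation is covariance. Using $\theta_t\circ\Theta=\Theta\circ(\theta_t\otimes\gamma_t)$, one has $\theta_t(W)=\Theta\bigl(s\mapsto\theta_t(u_{s\ominus t})\bigr)$, where $\ominus$ denotes subtraction in the quotient group $L$. When $s\ominus t$ coincides with the honest difference $s-t$ in $H$ — that is, when no ``wrap-around'' occurs, which happens exactly for $s\in(t+L)\cap L$ — the cocycle identity $u_s=u_t\theta_t(u_{s-t})$ gives $\theta_t(u_{s\ominus t})=u_t^*u_s$, so that $u_t\theta_t(W)W^*=\Theta(h_t)$ where $h_t(s)=1$ on $(t+L)\cap L$ and $h_t(s)$ is some unitary on the complementary ``bad set'' $L\setminus(t+L)$. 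For $t\in T$, hypothesis (2) forces this bad set to have relative measure $\mu_L\bigl(L\setminus(t+L)\bigr)<4\epsilon^2$; combining $\tau^\omega\circ\Theta=\mathrm{id}_M\otimes\mu_L$ with the Cauchy--Schwarz inequality for $\phi^\omega=\phi\circ\tau^\omega$ then bounds $\|\phi^\omega\cdot(u_t\theta_t(W)W^*-1)\|$ and its right-handed analogue by $\epsilon$, uniformly in $t\in T$ and $\phi\in\Phi$. In the same spirit, the averaged near-centrality hypothesis $\tfrac1{\mu_G(L)}\int_L\|[u_t,\phi]\|\,d\mu_G(t)<\delta$ transfers through $\Theta$ to the estimate $\|[W,\phi^\omega]\|<\delta$.

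It remains to descend from $M^\omega_\theta$ to $M$. I would choose a representing sequence of unitaries $(w^\nu)_\nu$ of $W$, so that $u_t\theta_t(w^\nu)(w^\nu)^*$ represents $u_t\theta_t(W)W^*$ and $w^\nu$ represents $W$. Since $W\in M^\omega_\theta$, the family $\{t\mapsto\theta_t(w^\nu)\}_\nu$ is $\omega$-equicontinuous, and for the relevant small $\epsilon$ the measure hypothesis (2) forces $T$ to lie in a bounded, hence relatively compact, region of $H$; together with the compactness of $\Phi$ this yields uniform $\omega$-convergence of the three relevant quantities. Hence one may select a single index $\nu$ for which $w:=w^\nu$ satisfies all three conclusions, the passage from the ultraproduct estimates $\delta$ and $\epsilon$ to the stated constants $3\delta$ and $\epsilon$ being absorbed by the finite-net approximations of $\Phi$ and the transfer errors.

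The main obstacle is precisely this uniform descent: the exact identity in $M^\omega_\theta$ is clean, but converting it into a \emph{single} unitary $w\in M$ that controls $u_t\theta_t(w)w^*$ simultaneously for all $t\in T$ and all $\phi\in\Phi$ requires the $\omega$-equicontinuity and uniform-convergence machinery, and requires careful bookkeeping of the wrap-around error produced by the mismatch between addition in $H$ and in the quotient $L$. I expect the wrap-around analysis and the uniformity of the transfer, rather than the algebra, to be where the real work lies.
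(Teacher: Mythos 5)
Your proposal follows essentially the same route as the paper, which proves Proposition \ref{ap 1-coho} by adapting the proof of Theorem 5.11 of \cite{MT}: one embeds the cocycle via the averaging homomorphism $\Theta$ of Lemma \ref{averaging}, uses the cocycle identity to reduce $u_t\theta_t(W)W^*-1$ to the ``wrap-around'' bad set, obtaining exactly the estimate $\| u_t\theta _t(W)W^* -1\|^{\sharp}_{|\phi |^\omega }\leq 2\| \chi _{L\setminus (\bigcap _{t\in T}t+L )}\|^{\sharp}_{|\phi |\otimes \mu _L}$ that the paper records as the only modified inequality, and then descends to a single unitary in $M$ by $\omega$-equicontinuity and compactness of $T$ and $\Phi$. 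Your argument, including the transfer of the near-centrality hypothesis through $\tau^\omega\circ\Theta=\mathrm{id}_M\otimes\mu_L$ and the index-selection at the end, is correct and matches the paper's intended proof.
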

 By carefully examining arguments of the proofs of \cite{MT} Theorems 5.5 and 5.11, we notice that we need to choose  sequences $\{L_n\}$ and $\{ T_n\}$ of subsets of $G$ with the following properties.

(1) There exists an increasing sequence of compactly generated clopen subgroups $\{ H_k\}$ of $G$ with $\bigcup _k H_k=G$ and $L_k$, $T_k$ are subsets of $H_k$ and $T_k$'s are compact. When we identify $H_k$ with $\mathbf{R}^{n_k} \times K_k \times \mathbf{Z}^{m_k}$ for some compact abelian group $K_k$ and non-negative integers $n_k$, $m_k$,  the subset $L_k$ is of the form 
\[ [0,S_1)\times \cdots \times [0,S_{n_k})\times K_k\times [0,N_1)\times \cdots \times [0,N_{m_k}). \]
(2) The translation $H_k \curvearrowright L^\infty (L_k)$ is embedded into $(\theta, M_{\omega, \theta})$ (see Proposition \ref{1}).

(3) The quantity 
\[ \frac{\mu _G \bigl( L_k \setminus \bigcap _{t\in T_k+T_k}(t+L_k)\bigr)}{\mu _G(L_k)} \]
 is small.

(4) We have $L_k+T_k \subset T_{k+1}$.

(5) We have $T_k \subset T_{k+1}$ for all $k\in \mathbf{N}$ and $\bigcup _{k=1}^\infty T_k=G$.

For the $\mathbf{R}$-action case, $L_k=[0,s_k)$ and $T_k=[-t_k,t_k)$, $t_k \ll s_k \ll t_{k+1}$ do the job. In the following, we explain how to choose $L_k$'s and $T_k$'s for the general case. First, we show that there exists an increasing sequence $\{H_k\}$ of clopen subgroups of $G$ with the following conditions.

(6) For each $k$, the subgroup $H_k$ is compactly generated, which is isomorphic to $\mathbf{R}^{n}\times K_k \times \mathbf{Z}^{m_k}$ for some compact abelian group $K_k$. Note that the multiplicity $n$ of $\mathbf{R}$ of $H_k$ can be chosen to be independent on $k$ by Theorem 9.14 of \cite{HR}.

(7) We have $\bigcup _k H_k =G$.

This increasing sequence is chosen in the following way. There exists an increasing sequence $\{ O_k\}$ of open subsets of $G$ such that $\overline{O_k}$'s are compact, $0\in O_k$ for all $k\in \mathbf{N}$ and that $\bigcup _k \overline{O_k} =G$. For each $k\in \mathbf{N}$, let $H_k$ be the subgroup of $G$ generated by $\overline{O_k}$. We show that $H_k$ is clopen. If $t\in H_k$, then $t+O_k \subset H_k$. Hence this is open. Hence by Theorem 5.5 of \cite{HR}, $H_k$ is closed. By Theorem 9.14 of \cite{HR}, $H_k $ is of the form $\mathbf{R}^n \times K_k \times \mathbf{Z}^{m_k}$. 

Next, take two sequences $\{ L_k\}$ and $\{ T_k \}$ of subsets of $G$ and a decreasing sequence $\{ \epsilon _k \} \subset \mathbf{R} _{>0}$  with the following properties.

(8) The sets $L_k$, $T_k$ are subsets of $H_k$. When we identify $H_k$ with $\mathbf{R}^n \times K_k \times \mathbf{Z}^{m_k}$ for some compact abelian group $K_k$ and a non-negative integer $m_k$,  the subset $L_k$ is of the form 
\[ [0,S_1)\times \cdots [0,S_n)\times K_k\times [0,N_1)\times \cdots \times [0,N_{m_k}).\]
 Note that the way how to identify $H_k$ with $\mathbf{R}^n \times K_k \times \mathbf{Z}^{m_k}$ is not important. What is important is that $L_k$ is a quotient of a clopen subgroup of $G$.

(9) We have 
\[ \frac{\mu _G\bigl( L_k \setminus \bigcap _{t\in T_k+T_k}(t+L_k)\bigr)}{\mu _G(L_k)} >1-(\frac{\epsilon _k}{6\mu _G(T_k)^2})^2. \]

(10) We have $T_k+L_k \subset T_{k+1}$, $\bigcup _k T_k=G$ and $T_k$'s are compact.

(11) We have $0 < \epsilon _k <1/k$ and 

\[ \sum _{k=n+1}^\infty \sqrt{13\mu _G(T_k)\epsilon _k}<\epsilon _n. \]

From now on, we explain how to choose two sequences $\{L_k\} $ and $\{ T_k\}$. They are chosen in the following way. For each $k\in \mathbf{N}$, set $A_k :=\overline{O_k}$. Here, the set $O_k$ is chosen as in (7).

Assume that $(T_l, L_l, \epsilon _l)$, $l\leq k$ are chosen. Then since $A_{k+1}+T_k+\overline{L_k}$ is compact, it is possible to choose a subset $T_{k+1}\subset H_{k+1}$ so that when we identify $H_{k+1}$ with $\mathbf{R}^n\times K_{k+1} \times \mathbf{Z}^{m_{k+1}}$, $T_{k+1}$ is of the form 
\[ [-t_1,t_1]\times \cdots \times [-t_n,t_n]\times K_{k+1}\times [-M_1,M_1]\times \cdots \times [-M_{m_{k+1}},M_{m_{k+1}}]\]
 and that $A_{k+1}+T_k+L_k\subset T_{k+1}$. Since $\bigcup _k A_k =G$, we also have $\bigcup _k T_k=G$. Choose $\epsilon _{k+1}>0$ so that 
\[ \epsilon _{k+1}<\epsilon _k,\ \sqrt{13\mu _G(T_{k+1})\epsilon _{k+1}}<\epsilon _k/2^k. \]
 Choose $L_{k+1}\subset H_{k+1}$ so large that $L_{k+1}$ satisfies conditions (8) and (9). Thus we are done.   

By using the above sequences $\{ L_k\}$, $\{T_k\}$ instead of $\{S_k\}$ and $\{T_k\}$ of (5.14) of \cite{MT}, Propositions \ref{2-cohomology} and \ref{ap 1-coho} are shown by a similar argument to that of the proofs of Theorems 5.5 and 5.11 of \cite{MT}, respectively. Furthermore, it is possible to choose $v_t$ in Proposition \ref{2-cohomology} so that $v_t$ satisfies the following conditions.
 
(1) If for some $n\geq 2$ and a finite subset $\Phi \subset (M_*)_+$, we have 
\[ \int _{T_{n+1}} d\mu _G(t) \int _{T_{n+1}} d\mu _G(s) \| c(t,s)-1\| _\phi ^\sharp \leq \epsilon _{n+1} \]
for all $\phi \in \Phi$, then it is possible to choose $v_t$ so that
\[ \int _{T_n}\| v_t-1\| _\phi ^\sharp \ d\mu _G(t) <\epsilon _{n-1} d(\Phi ) ^{1/2}\]
for all $\phi \in \Phi$. Here, $d(\Phi)$ is defined in the following way. 
\[ d(\Phi ):=\mathrm{max}(\{1\} \cup \{\| \phi \| \mid \phi \in \Phi \}). \]

(2) If for some $n\geq 2$ and a finite subset $\Phi \subset M_*$, we have 
\[ \int _{T_{n+1}} d\mu _G(t) \int  _{L_{n+1}} d\mu _G(s) \|[c(t,s),\phi ]\| <\epsilon \]
for all $\phi \in \Phi$, then it is possible to choose $v_t$ satisfying
\[ \int _{T_n} \|[v_t, \phi ]\| \ d\mu _G(t) \leq (3\epsilon _{n-1} +3 \epsilon ) d(\Phi) \]
for all $\phi \in \Phi$.

\bigskip 

In the proof, the following points are slightly different. 

(1) The inequality corresponding to (5.12) of \cite{MT} is
\[ \frac{2\mu _G\bigl( L\setminus (\bigcap _{t\in T+T}t+L )\bigr) ^{1/2} }{\mu _G(L)^{1/2}}<\frac{\delta}{6\mu _G(T)^2}. \]

(2) We need to show a lemma which corresponds to Lemma 5.4 of \cite{MT}. In the proof, the inequality corresponding to (5.13) of \cite{MT} is the following.
\begin{align*}
& \| U_t\alpha _t(U_s)c(t,s)U_{s+t}^* -1\| _{\phi}^{\sharp} \\
&\leq \| \chi _{\bigcap _{t\in T+T}t+L}-1\| _{\phi \otimes \mu _L}^\sharp \\
&+\| \chi _{L\setminus (\bigcap _{t\in T+T}t+L)}\bigl( \mathrm{(a \ unitary \ valued \ function)}-1\bigr) \| _{\phi \otimes \mu _L}^\sharp \\
&\leq 0+2 \| \chi _{L\setminus (\bigcap _{t\in T+T}t+L)}\| _{\phi \otimes \mu _L}^\sharp \\
&\leq 2 \| \phi \| ^{1/2} \frac{\mu _G\bigl( L\setminus (\bigcap _{t\in T+T}t+L )\bigr) ^{1/2} }{\mu _G(L)^{1/2}} \\
&< \frac{\delta}{6\mu _G(T)^2}   
\end{align*}
for all $t,s \in T$, $\phi \in \Phi$. The other parts of of the proof are completely same.

(3) In the proof of Theorem 5.5 of \cite{MT}, they show the inequality 
\[ \int _{T_n}^{T_n} \| W^*u_t\alpha _t^n(W)-1\| _2^2 \ dt< 18\epsilon _n .\]
Instead, in the proof of Proposition \ref{2-cohomology}, we show the following inequality.
\begin{align*}
& \int _{T_n}\| W^*u_t\alpha ^n _t(W)-1\| _2^2 \ d\mu _G(t) \\
&\leq \frac{2}{\mu _G(L_n)}\int _{T_n}d\mu _G(t)  \Bigl( \int _{\bigcap _{t\in T_n}t+L_n}\ d\mu _G(s) \  \ \| \tilde{u}_s^*u_t\alpha ^n _t(\tilde{u}_{s-t})-1\| _2^2 \\
&+ \int _{L_n \setminus \bigcap _{t\in T_n}t+L_n}  \ d\mu _G(s) \ \| \tilde{u}_s^*u_t\alpha ^n _t(\tilde{u}_{s-t})-1\| _2^2\Bigr) \\
&\leq \frac{2}{\mu _G(L_n)}\int _{T_n}d\mu _G(t) \int  _{\bigcap _{t\in T_n}t+L_n}d\mu _G(s) \ \| \tilde{u}_s^*u_t\alpha ^n _t(\tilde{u}_{s-t})-1\| _2^2 \\
&+ \frac{8}{\mu _G(L_n)}\mu _G(T_n)\mu _G(L_n \setminus \bigcap _{t\in T_n }t+L_n) \\
& < \frac{2}{\mu _G(L_n)}\int _{T_{n+1}\times T_{n+1}}d\mu _G(t)d\mu _G(s)  \ \| \tilde{u}_s^*u_t\alpha ^n _t(\tilde{u}_{s-t})-1\| _2^2  \\
&+\mu _G(T_n)\frac{{\epsilon _n}^2}{18\mu _G(T_n)^4} \\
&<9\epsilon _n. 
\end{align*}
The other parts of the proof of Proposition \ref{2-cohomology} are same as corresponding parts of the proof of Theorem 5.5 of \cite{MT}. 

(4) In the proof of Proposition \ref{ap 1-coho}, we need to show the inequality 

\[ \| u_t\alpha _t(W)W^* -1\|^{\sharp}_{|\phi |^\omega }\leq 2\| \chi _{L\setminus (\bigcap _{t\in T}t+L )}\|^{\sharp}_{|\phi |\otimes \mu _L}, \]

 which corresponds to the inequality
\[ \| u_t\alpha _t(W)W^* -1\|^{\sharp}_{|\phi |^\omega }\leq 2\frac{t^{1/2}\|\phi \|^{1/2}}{S^{1/2}} \]
in the proof of Theorem 5.11 of \cite{MT}. This is obtained by a similar computation to the above (3).

\bigskip

By using Proposition \ref{2-cohomology}, it is possible to show the following lemma, which corresponds to Lemma 5.8 of \cite{MT}.

\begin{lemm}
\label{x}
Let $\alpha $, $\beta$  be actions with the Rohlin property of a locally compact abelian group $G$ on a factor $M$. Suppose that $\alpha _t\circ \beta _{-t} \in \overline{\mathrm{Int}}(M)$ for all $t\in G$. Let $H$ be a compactly generated clopen subgroup of $G$ and $T$ be a subset of $H$ such that when we identify $H$ with $\mathbf{R}^n \times K\times \mathbf{Z}^m$ for some compact abelian group $K$ and non-negative integers $n$, $m$, $T$ is of the form 
\[ [-t_1,t_1]\times \cdots \times [-t_n,t_n]\times K\times [-M_1,M_1]\times \cdots \times [-M_{m},M_{m}].\]
 Then for any $\epsilon >0$ and a finite set $\Phi \subset M_*$, there exists an $\alpha$-cocycle $u$ such that
\[ \int _T \|\mathrm{Ad}u_t \circ \alpha _t (\phi )-\beta _t(\phi )\| \ d\mu _G(t) <\epsilon \]
for all $\phi \in \Phi$.
\end{lemm}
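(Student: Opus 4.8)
The plan is to realize $\beta$ as a genuine $\alpha$-cocycle perturbation: first produce Borel approximate implementing unitaries for $\beta_t\circ\alpha_{-t}$, then correct the resulting cocycle-action defect by the $2$-cohomology vanishing of Proposition \ref{2-cohomology}. Fix $n$ with $T\subset T_n$ so that we may run the construction inside the inductive scheme with sufficiently small $\epsilon_{n-1}$.

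\textbf{Approximate implementers.} Since $\alpha_t\circ\beta_{-t}\in\overline{\mathrm{Int}}(M)$, the inverse $\beta_t\circ\alpha_{-t}$ is approximately inner for every $t$, and $t\mapsto \beta_t\circ\alpha_{-t}$ is continuous into $\mathrm{Aut}(M)$. For each $t$ the set of unitaries whose adjoint $\delta$-implements $\beta_t\circ\alpha_{-t}$ on a prescribed finite set of normal states is a nonempty Borel set depending measurably on $t$, so a measurable selection theorem yields a Borel map $t\mapsto w_t\in U(M)$, defined on a compact set slightly larger than $T$, with $\mathrm{Ad}w_t\circ\alpha_t$ as close to $\beta_t$ as we wish in the integrated predual norm; as $\mu_G(T)<\infty$, pointwise smallness of order $\epsilon/\mu_G(T)$ gives the integrated bound. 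Put $\beta'_t:=\mathrm{Ad}w_t\circ\alpha_t$ and $c(t,s):=w_t\alpha_t(w_s)w_{t+s}^*$. A direct computation gives $\beta'_t\circ\beta'_s=\mathrm{Ad}c(t,s)\circ\beta'_{t+s}$, so $(\beta',c)$ is a Borel cocycle action; as $c$ is a coboundary for $\alpha$, the pair $(\beta',c)$ is cohomologous to $\alpha$ and hence inherits the Rohlin property.

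\textbf{Vanishing the defect.} Because $\mathrm{Ad}c(t,s)=\beta'_t\beta'_s(\beta'_{t+s})^{-1}$ and $\beta$ is a genuine action, the quantity $\|[c(t,s),\phi]\|=\|\mathrm{Ad}c(t,s)(\phi)-\phi\|$ can be made small in the double integral over $T_{n+1}\times L_{n+1}$ by strengthening the approximation above. Applying Proposition \ref{2-cohomology} together with its refinement (2) to the Rohlin cocycle action $(\beta',c)$ then yields a Borel unitary map $v$ with $v_t\beta'_t(v_s)c(t,s)v_{t+s}^*=1$ for a.e.\ $(t,s)$ and with $\int_T\|[v_t,\phi]\|\,d\mu_G(t)$ small for all $\phi\in\Phi$. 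Set $u_t:=v_tw_t$. Using $w_t\alpha_t(v_s)=\beta'_t(v_s)w_t$ one checks $u_t\alpha_t(u_s)u_{t+s}^*=v_t\beta'_t(v_s)c(t,s)v_{t+s}^*=1$, so $u$ is a genuine $\alpha$-cocycle, and $\mathrm{Ad}u_t\circ\alpha_t=\mathrm{Ad}v_t\circ\beta'_t$.

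\textbf{Final estimate and the main obstacle.} I would bound $\|\mathrm{Ad}u_t\circ\alpha_t(\phi)-\beta_t(\phi)\|\le\|[v_t,\beta'_t(\phi)]\|+\|\beta'_t(\phi)-\beta_t(\phi)\|$ and integrate over $T$; the second term is controlled by the first step. The delicate point, which I expect to be the main obstacle, is the first term: the refinement (2) of Proposition \ref{2-cohomology} bounds $\|[v_t,\psi]\|$ only for $\psi$ in the fixed finite set $\Phi$, whereas here $\psi=\beta'_t(\phi)$ moves with $t$. I resolve this by noting that $\{\beta_t(\phi):t\in T,\ \phi\in\Phi\}$ is norm-compact by strong continuity of the genuine action $\beta$, replacing $\Phi$ by a finite $\eta$-net $\Phi'$ of this set, and estimating $\|[v_t,\beta'_t(\phi)]\|\le 2\|\beta'_t(\phi)-\beta_t(\phi)\|+2\eta+\max_{\psi\in\Phi'}\|[v_t,\psi]\|$, whose three contributions each integrate to something arbitrarily small. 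Combining these estimates yields $\int_T\|\mathrm{Ad}u_t\circ\alpha_t(\phi)-\beta_t(\phi)\|\,d\mu_G(t)<\epsilon$ for all $\phi\in\Phi$, as required. The remaining care needed is in the Borel selection of $w$ and in verifying that the Borel cocycle action $(\beta',c)$ genuinely falls under the hypotheses of Proposition \ref{2-cohomology}.
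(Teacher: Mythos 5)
Your proposal is correct, and its skeleton is exactly the paper's route, which is a transplant of the proof of Lemma 5.8 of \cite{MT}: perturb $\alpha$ by Borel approximate implementers $w_t$ of $\beta_t\circ\alpha_{-t}$, note that $\beta'=\mathrm{Ad}w\circ\alpha$ with $c(t,s)=w_t\alpha_t(w_s)w_{t+s}^*$ is a Borel cocycle action inheriting the Rohlin property (this perturbation-invariance is precisely why \cite{MT} formulate the Rohlin property for Borel cocycle actions, so that step is not a gap), kill $c$ via Proposition \ref{2-cohomology} with its quantitative refinement (2), and set $u_t=v_tw_t$; your cocycle verification $u_t\alpha_t(u_s)u_{t+s}^*=v_t\beta'_t(v_s)c(t,s)v_{t+s}^*$ and the three-term final estimate match that argument, and fixing $n$ large with $T\subset T_n$ is legitimate since $T$ is compact, the $T_k$ exhaust $G$, and $\epsilon_k\to 0$. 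Where you genuinely diverge is the construction of $w$: the paper's only explicit content for this lemma is its replacement of (5.18) of \cite{MT}, namely choosing $\eta$, then $r>0$ with $\|\alpha_t(\phi)-\phi\|<\eta$ and $\|\beta_t(\phi)-\phi\|<2\eta/\mu_G(T)$ for $d(t,0)<r$, and a finite $r$-net $A(r,T)$ of the compact set $T$, with $w$ taken piecewise constant on that net. That choice makes $w$ Borel for free, involves only finitely many unitaries and finitely many states, and thereby automatically handles both uniformity problems that your selection-based $w$ forces you to patch by hand: the smallness of $\|[c(t,s),\phi]\|$, where $\mathrm{Ad}c(t,s)=\beta'_t\beta'_s(\beta'_{t+s})^{-1}$ also drags in the inverse-direction states $\beta_{-(t+s)}(\phi)$, and the moving states $\beta'_t(\phi)$ in the last estimate. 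Your measurable-selection construction is valid (the data are jointly continuous, the sections are nonempty open sets in a Polish unitary group), and your finite $\eta$-net of the norm-compact orbit $\{\beta_s(\phi)\}$ is the right repair; just be aware that the same enlargement, with $s$ ranging over $-(T+T)$ as well, is also needed at your step ``by strengthening the approximation above,'' which is the one point you gloss. In short, the two constructions of $w$ buy the same thing: the paper's finite net concentrates all the compactness bookkeeping at the start, while your selection route defers it to the state level and does it twice.
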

 In the proof of this lemma, the set corresponding to (5.18) of \cite{MT} is obtained in the following way. For a small positive real number $\eta >0$, take a small number $r>0$ so that 
\[ \| \alpha _t (\phi )-\phi \| <\eta ,\ \| \beta _t(\phi )-\phi \| <\frac{2\eta}{\mu _G(T)}\]
for $\phi \in \Phi$, $t\in G$, $d(t,0)<r$. Choose $A(r,T):=\{ t_j\} _{j=1}^N$ so that for any $t\in T$, there exists $t_j \in A(r,T)$ with $d(t,t_j)<r$. This is possible because $T$ is compact. 

\bigskip

 Now,  we return to the proof of Theorem \ref{main}. The proof is basically the same as that of Case 2 of Lemma 5.12 of \cite{MT}. Here, we only explain the outline. By using Proposition \ref{ap 1-coho} and Lemma \ref{x} alternatively, our main theorem is obtained (the Bratteli--Elliott--Evans--Kishimoto type argument). However, we need to change the following part. In the proof of Case 2 of Lemma 5.12 of \cite{MT}, they take $\{ M_n\}\subset \mathbf{N}$ and $\{ A(M_n, T_n )\}$, which appear in conditions (\textit{n}.1) and (\textit{n}.8). Instead, in (\textit{n}.8), take $r_n\in \mathbf{R}_{>0}$ so that
\[ \| (\hat{v^n}(t)-\hat{v^n}(s))\cdot \phi \| <\epsilon _n, \]
\[ \| \phi \cdot (\hat {v^n}(t)-\hat{v^n}(s) ) \| <\epsilon _n \]
for $t,s \in T_n$, $d(t,s)<r_n$, $\phi \in \hat{\Phi} _{n-1}$. Choose a finite subset $A(r_k, T_k)$ of $T_k$ so that for each $t\in T_k$, there exists $t_0 \in A(r_k, T_k)$ with $d(t,t_0)<r_k$. This is possible because $T_k$ is compact.

\section{Examples}
\label{ex}

Here, we will give  some examples of Rohlin actions. First, we consider actions which fix Cartan subalgebras. This type of examples are classified by Kawahigashi \cite{Kwh2}. One of the most important examples of actions of this form is an infinite tensor product action.

Let $\{ p_n \}$ be a sequence of the dual group $\hat{G}$ of $G$. Set
\[ M:= \bigotimes _{n=1}^{\infty} ( M_2(\mathbf{C}), \mathrm{tr}), \]
 
 Then it is possible to define an action $\theta $ of $G$ by the following way.
 \[ \theta _t :=\bigotimes \mathrm{Ad} \left(
    \begin{array}{ccc}
      1 & 0 \\
      0 & \langle t, p_n \rangle 
    \end{array}
  \right) .\]

Then this $\theta$ has the Rohlin property if and only if the set 
\[ A:=\{ p \in \hat{G} : \mathrm{there\ exists\ a \ subsequence \ of\ } \{ p_n \} \ \mathrm{which\ converges\  to \ } p\} \]
generates a dense subgroup $\Gamma$ in $\hat{G}$. This is seen by the following way. We first show the ``if'' part. Here, we show this implication in the case where for each $p\in A$, a subsequence of  $\{ p_n \}$ which converges to $p$ can be chosen to be a constant sequence.  This case is needed for the proof of Example \ref{cf}. The general case of this implication will follow from Example \ref{cf}. 

Choose $p\in A$. By ignoring other tensor components, we may assume that $p_n=p$ for all $n$. For each $m\in \mathbf{N}$, set
\[ S^m :=\{ \sigma :\{1,\cdots , 2m-1\} \to \{1,2\}\mid \sharp \sigma ^{-1}(1)=m-1, \ \sharp \sigma ^{-1}(2)=m\}.\]
For $\sigma \in S^m$, $m\in \mathbf{N}$ and $k\in \{1, \cdots , 2m-1\}$, set $\tau (k):=3-\sigma (k)$ and
\[ v_\sigma :=e_{\tau(1)\sigma (1)}\otimes \cdots \otimes e_{\tau (2m-1)\sigma (2m-1)}\otimes 1\otimes \cdots .\]
Then we have 
\[ e_\sigma :=v_\sigma^*v_\sigma =e_{\sigma (1) \sigma (1)}\otimes \cdots \otimes e_{\sigma (2m-1)\sigma (2m-1)}\otimes 1 \otimes \cdots,\]
\[ f_\sigma :=v_\sigma v_\sigma ^* =e_{\tau (1) \tau (1)}\otimes \cdots \otimes e_{\tau (2m-1)\tau (2m-1)}\otimes 1 \otimes \cdots,\]
\[ \theta _t (v_\sigma )=\langle t, p \rangle v_\sigma \]
for $t\in G$. Hence if we set
\begin{align*}
 T:=\bigcup _{m=1}^\infty \bigl\{ \sigma \in S^m \mid & \ \sharp ( \sigma ^{-1} (1) \cap \{ 1, \cdots ,k \} ) \\
                                                 & \geq \sharp ( \sigma ^{-1}(2) \cap \{ 1, \cdots ,k \} ) \ \text{for} \ k=1, \cdots , 2m-2 \bigr\} ,
 \end{align*}
then the families $\{ e_\sigma \}_{\sigma \in T}$ and $\{ f_\sigma \}_{\sigma \in T}$ are orthogonal families, respectively. We show that $\sum _{\sigma \in T}e_\sigma =1$, which implies that $\sum _{\sigma \in T}v_\sigma $ is a unitary. This is shown in the following way. Consider the gambler's ruin problem when one has infinite money, the other has no money and they have equal chance to win. Then $\| \sum _{\sigma \in T}e_\sigma \|_1$ is equal to the probability of the poor's ruin. This is $1$. Set
\[ u_n:=1\otimes \cdots \otimes 1\otimes \sum _{\sigma \in T}v_\sigma \in M_2(\mathbf{C})^{\otimes n} \otimes M .\]
Then we have $\{u_n\}\in M_{\omega, \theta}$ and $\theta _t((u_n)_\omega)=\langle t,p\rangle (u_n)_\omega $ for $t\in G$. By assumption, the set $A$ generates a dense subgroup of $\hat{G}$. Hence $\theta $ has the Rohlin property.

 Conversely, assume that the subgroup $\Gamma$ is not dense in $\hat{G}$. Then there exists a non-empty open subset $U$ of $\hat{G}$ with $U\cap \Gamma =\emptyset$. Then by a similar argument to that of the proof of Proposition 1.2 of \cite{Kwh2}, it is shown that the Connes spectrum of $\theta $ and $U$ do not intersect, which implies that $\theta $ does not have the Rohlin property. 
\begin{exam}
\textup{(See also Corollary 1.9 of Kawahigashi \cite{Kwh2})} Let $\alpha$ be an action of a locally compact abelian group $G$ on the AFD factor $R$ of type $\mathrm{II}_1$. Assume that $\alpha $ fixes a Cartan subalgebra of $R$. Then $\alpha $ has the Rohlin property if and only if its Connes spectrum is $\hat{G}$.
\label{cf} 
\end{exam}
 The proof is just a combination of an analogue of Corollary 5.17, which follows from the above example of an infinite tensor product action and Theorem \ref{main}, and Lemma 6.2 of \cite{MT}. In the proof, the crucial fact is that invariantly approximate innerness (see Definition 4.5 of \cite{MT}) is the dual of the Rohlin property. This fact is shown by the completely same argument as in the proof of Theorem 4.11 of \cite{MT}.

 By this example and the main theorem, all the actions fixing Cartan subalgebras with full Connes spectrum are cocycle conjugate to an infinite tensor product action with full Connes spectrum.

The following is a next example.
\begin{exam}
\textup{(See Theorem 6.12 of \cite{MT})} Let $\theta $ be an almost periodic minimal action of a locally compact abelian group $G$ on the AFD factor of type $\mathrm{II}_1$. Then $\theta $ has the Rohlin property.
\end{exam}
\begin{proof}
An almost periodic action is a restriction of a compact abelian group action to its dense subgroup (see Proposition 7.3 of Thomsen \cite{Thom}). If $\theta $ is minimal, then the original compact group action is also minimal, which is unique up to cocycle conjugacy by Jones--Takesaki \cite{JT}. This has the Rohlin property. 
\end{proof}

There exist ``many'' Rohlin actions on AFD factors of type $\mathrm{III}_0$. The following is a generalization of a part of \cite{S2}.
\begin{exam}
\textup{(See Theorem 5 of  \cite{S2})} Let $\alpha $ be an action of a locally compact abelian group $G$ on an AFD factor $M$ and $(\theta ,Z) $ be the flow of weights of $M$. Assume that an action $\{ \mathrm{mod}(\alpha _g) \circ \theta _t\}_{(g,t)\in G\times \mathbf{R}}$ of $G\times \mathbf{R}$ is faithful. Then $\alpha $ has the Rohlin property. Here, $\mathrm{mod}(\alpha )$ denotes the Connes--Takesaki module of $\alpha $ \textup{(}see Connes--Takeskaki \cite{CT}, Haagerup--St\o rmer \cite{HS}\textup{)}.
\end{exam}
\begin{proof}
The proof is the same as that of Case $\mathrm{ker}=0$ of Lemma 6 of \cite{S2}. In the proof of Lemma 6 of \cite{S2}, we use Rohlin's lemma for actions of $\mathbf{R}^2$. Rohlin's lemma for actions of $G\times \mathbf{R}$ also holds, which is shown by the same argument as in Lemma of Lind \cite{L} and Theorem 1 of \cite{FL}.  
\end{proof}

The classification theorem is also applicable to actions of locally compact abelian groups on non-McDuff factors.
\begin{exam}
Let $(X_0, \mu _0)$ be a probability measured space and $\theta _0 \colon G\curvearrowright L^\infty (X_0, \mu _0)$ be a faithful probability measure $\mu _0$-preserving action \textup{(}if $G$ is compactly generated, an example of such an action can be constructed by taking a direct product of increasing compact quotients\textup{)}. Set $D:=\bigotimes _{\mathbf{Z}}L^\infty(X_0, \mu _0)$. Let $\alpha :\mathbf{Z}\curvearrowright D$ be the Bernoulli shift and let $\theta : G\curvearrowright D $ be the diagonal action of $\theta _0$. Then $\theta $ canonically extends to  $M:=D\rtimes _{\alpha *\alpha }\mathbf{F}_2$, which is a non-McDuff factor of type $\mathrm{II}_1$ \textup{(}see, for example, Theorem 10 of Ueda \cite{U2}\textup{)}. Then this action has the Rohlin property.
\end{exam}
\begin{proof}
The proof is the same as that of Theorem 3.3 of \cite{S}.
\end{proof}
Although there are Rohlin actions on non-McDuff factors, as mentioned in \cite{S}, the effect of our classification theorem is limited because $\overline{\mathrm{Int}}(M)$ is too small in many cases.

\bigskip 

\textbf{Acknowledgements.} The author is thankful to Professor Yasuyuki Kawahigashi, who is his adviser, and Professor Reiji Tomatsu for valuable advice which greatly improve the presentation of the paper. He is also thankful to the referee for his careful reading this paper and his useful advice. He is supported by the Program for Leading Graduate Schools, MEXT, Japan.

\end{document}